\numberwithin{equation}{section}
\numberwithin{figure}{section}
\theoremstyle{plain}
\newtheorem{thm}{\protect\theoremname}
  \theoremstyle{definition}
  \newtheorem{defn}[thm]{\protect\definitionname}
  \theoremstyle{definition}
  \newtheorem{example}[thm]{\protect\examplename}
  \theoremstyle{plain}
  \newtheorem{lem}[thm]{\protect\lemmaname}
  \theoremstyle{plain}
  \newtheorem{cor}[thm]{\protect\corollaryname}
\newcommand{\xyR}[1]{ \makeatletter
\xydef@\xymatrixrowsep@{#1} \makeatother} 
\newcommand{\xyC}[1]{ \makeatletter
\xydef@\xymatrixcolsep@{#1} \makeatother} 
\newcommand{\ra}{\longrightarrow}
\newcommand{\field}[1]{\mathbb{#1}}
\newcommand{\R}{\field{R}} 
\newcommand{\N}{\field{N}} 
\newcommand{\D}{\mathcal{D}}
\newcommand{\eps}{\varepsilon} 
\renewcommand{\phi}{\varphi} 
\newcommand{\diff}[1]{\,\hbox{\rm d}#1} 
\newcommand{\Coo}{\mbox{\ensuremath{\mathcal{C}}}^{\infty}} 
\newcommand{\OR}{{\mathcal{O}}\mbox{$\R^\infty$}} 
\newcommand{\Rtil}{\widetilde \R} 
\newcommand{\gse}{{\mathcal G}^{\text{\rm e}}} 
\newcommand{\gsd}{{\mathcal G}^{\text{\rm d}}} 
\newcommand{\erm}{\text{\rm e}} 
\newcommand{\srm}{\text{\rm s}}
\newcommand{\drm}{\text{\rm d}}
\newcommand{\Colomb}{\mathcal{C}\!\text{\rm \emph{o}}\ell }
\newcommand{\maC}{\mathcal{C}}
\newcommand{\B}{\mathcal{B}}
\newcommand{\GBZ}{\mathcal{G}(\mathcal{B},\mathcal{Z},\Omega)}
  \providecommand{\corollaryname}{Corollary}
  \providecommand{\definitionname}{Definition}
  \providecommand{\examplename}{Example}
  \providecommand{\lemmaname}{Lemma}
\providecommand{\theoremname}{Theorem}
\begin{document}

\title{The category of Colombeau algebras}

\author{Lorenzo Luperi Baglini \and Paolo Giordano}

\thanks{P.~Giordano has been $\text{supp}$orted by grants P25116-N25 and
P25311-N25 of the Austrian Science Fund FWF}

\address{\textsc{Faculty of Mathematics, University of Vienna, Austria, Oskar-Morgenstern-Platz
1, 1090 Wien, Austria}}

\thanks{L.~Luperi Baglini has been supported by grants P25311-N25 and M1876-N35
of the Austrian Science Fund FWF}

\email{\texttt{paolo.giordano@univie.ac.at, lorenzo.luperi.baglini@univie.ac.at}}
\begin{abstract}
In \cite{GiLu15}, we introduced the notion of asymptotic gauge (AG),
and we used it to construct Colombeau AG-algebras. This construction
concurrently generalizes that of many different algebras used in Colombeau's
theory, e.g. the special one $\mathcal{G}^{\srm}$, the full one $\gse$,
the NSA based algebra of asymptotic functions $\hat{\mathcal{G}}$,
and the diffeomorphism invariant algebras $\gsd$, $\mathcal{G}^{2}$
and $\hat{\mathcal{G}}$. In this paper we study the categorical properties
of the construction of Colombeau AG-algebras with respect to the choice
of the AG, and we show their consequences regarding the solvability
of generalized ODE.
\end{abstract}

\maketitle

\section{Introduction}

Although Colombeau algebras were introduced to find solutions of differential
problems which are not solvable in classical spaces of distributions,
it is well known that very simple equations remains not solvable also
in these algebras, see e.g. \cite{GKOS,GiLu15}. A step toward the
analysis of these problems is the generalization of the role of the
infinite nets $(\eps^{-n})\in\R^{(0,1]}$ appearing in the definition
of Colombeau algebras. This has already been done through the notions
of asymptotic scale, $(\mathcal{C},\mathcal{E},\mathcal{P})$-algebras,
$(\mathcal{M},\mathcal{N},V_{\mathcal{P}})$-algebras, exponent weights
and asymptotic gauges (AG), see \cite{GiLu15} and references therein.
In particular, if one considers the usual sheaf of smooth functions,
Colombeau AG-algebras is the simplest and most general approach. In
fact, Colombeau AG-algebras include different algebras, like the special
one $\mathcal{G}^{\srm}$, the full one $\gse$, the NSA based algebra
of asymptotic functions $\hat{\mathcal{G}}$, and the diffeomorphism
invariant algebras $\gsd$, $\mathcal{G}^{2}$ and $\hat{\mathcal{G}}$
(see \cite{GKOS}). Its simplicity lies, for all these algebras, in
the use of the simple logical structure of quantifiers that characterizes
the special algebra $\mathcal{G}^{\srm}$.

In the context of AG, it is therefore natural to set the following
questions:
\begin{itemize}
\item Is the construction of the Colombeau algebra functorial with respect
to the AG? Is this construction functorial with respect to the open
set $\Omega$?
\item When can we consider two AG as isomorphic? For instance, we will show
that the AG of polynomial growth is isomorphic to the AG of exponential
growth. This isomorphism holds in spite of the fact that using the
latter we can solve ODE which are not solvable with the former, see
Sec.~\ref{sec:An-unexpected-isomorphism}.
\item How to relate the solutions obtained using one AG to those obtained
using another one?
\item Colombeau theory can be more clearly summarized by saying that it
permits to define a differential algebra together with an embedding
of Sch\-war\-tz's distributions. This embedding can be intrinsic,
or diffeomorphism invariant, or it can be chosen in order to have
properties like $H(0)=\frac{1}{2}$, where $H$ is the Heaviside's
step function. Can we define a general category having as objects
triples $(G,\partial,i)$ made of an algebra $G$, a family of derivations
$\partial$ and an embedding of distributions $i$? Can we see $\mathcal{G}^{\srm}$
as a suitable functor with values in this category? What is the domain
of this functor?
\end{itemize}
\noindent In the present work, we answer these questions.

\section{Sets of Indices}

\subsection{Basic definitions}

In \cite{GiNi14}, the general notion of sets of indices has been
introduced. This notion permits to unify the presentation of several
Colombeau-type algebras of nonlinear generalized functions. For reader's
convenience, in this section we recall the notations and notions from
\cite{GiNi14} that we will use in the present work. For all the proofs,
we refer to \cite{GiNi14}.
\begin{defn}
\label{def:setOfIndices}We say that $\mathbb{I}=(I,\le,\mathcal{I})$
is a \emph{set of indices} if the following conditions hold: 
\begin{enumerate}[leftmargin=*,label=(\roman*),align=left ]
\item  $(I,\le)$ is a pre-ordered set, i.e., $I$ is a non empty set with
a reflexive and transitive relation $\le$ ;
\item $\mathcal{I}$ is a set of subsets of $I$ such that $\emptyset\notin\mathcal{I}$
and $I\in\mathcal{I}$;
\item \label{enu:SoI-Filter}$\forall A,B\in\mathcal{I}\,\exists C\in\mathcal{I}:\ C\subseteq A\cap B$.
\end{enumerate}

For all $e\in I$, set $(\emptyset,e]:=\left\{ \eps\in I\mid\eps\le e\right\} $.
As usual, we say $\eps<e$ if $\eps\le e$ and $\eps\ne e$. Using
these notations, we state the last condition in the definition of
set of indices:
\begin{enumerate}[leftmargin=*,label=(\roman*),align=left,start=4]
\item \label{enu:SoI-directed}If $e\le a\in A\in\mathcal{I}$ , the set
$A_{\le e}:=(\emptyset,e]\cap A$ is downward directed by $<$ , i.e.,
it is non empty and $\forall b,c\in A_{\le e}\,\exists d\in A_{\le e}:\ d<b,\ d<c$.
\end{enumerate}
\end{defn}
\noindent The following are examples of sets of indices.
\begin{example}
\label{exa:setsIndices}\ 
\begin{enumerate}[leftmargin=*,label=(\roman*),align=left]
\item \label{enu:I^s} Let $I^{\srm}:=(0,1]\subseteq\R$, let $\le$ be
the usual order relation on $\R$, and let $\mathcal{I}^{\srm}:=\left\{ (0,\eps_{0}]\mid\eps_{0}\in I\right\} $.
Following \cite{GiNi14}, we denote by $\mathbb{I}^{\srm}:=(I^{\srm},\le,\mathcal{I}^{\srm})$
this set of indices.
\item \label{enu:I^e}If $\phi\in\D(\R^{n}),\, r\in\R_{>0}$ and $x\in\R^{n}$,
we use the symbol $r\odot\phi$ to denote the function $x\in\R^{n}\mapsto\frac{1}{r^{n}}\cdot\phi\left(\frac{x}{r}\right)\in\R$,
see \cite{GiNi14}. With the usual notations of \cite{GKOS}, we define
$I^{\erm}:=\mathcal{A}_{0},\mathcal{\, I}^{\erm}:=\left\{ \mathcal{A}_{q}\mid q\in\N\right\} $,
and for $\eps,\, e\in I^{\erm}$, we set $\eps\le e$ iff there exists
$r\in\R_{>0}$ such that $r\le1$ and $\eps=r\odot e$. Then $\mathbb{I}^{\erm}:=(I^{\erm},\le,\mathcal{I}^{\erm})$
is a set of indices used in this framework to unify and simplify the
full algebra $\gse$ (see \cite[Sec.~3]{GiNi14}).
\item \label{enu:I-tilde-e}For every $\varphi\in\mathcal{A}_{0}$, let
us call \emph{order of }$\varphi$ the natural number
\[
o(\varphi):=\min{\{n\in\mathbb{N}\mid\varphi\in\mathcal{A}_{n}\setminus\mathcal{A}_{n+1}\}}
\]
and, for every $\varphi$, $\psi\in\mathcal{A}_{0}$, set 
\[
\varphi\lesssim\psi\mbox{ iff }o(\varphi)<o(\psi)\,\mbox{or\,}\varphi\leq\psi\,\mbox{in}\,\mathbb{I^{\erm}}.
\]
We have that $\widetilde{\mathbb{I}}^{\erm}=(\mathcal{\mathcal{A}}_{0},\lesssim,\{\mathcal{A}_{q}\mid q\in\mathbb{N}\})$
is a downward directed set of indices that can be used to try a simplification
of the full algebra $\gse$. See Sec.~\ref{sub:Downward-directed-sets}
for the nicer properties that downward directed sets have also with
respect to the notions we are going to introduce. 
\end{enumerate}
\end{example}
Henceforward, functions of the type $f:I\ra\R$ will also be called
\emph{nets}, and for their evaluation we will both use the notations
$f_{\eps}$ or $f(\eps)$, the latter in case the subscript notation
is too cumbersome. When the domain $I$ is clear, we use also the
notation $f=\left(f_{\eps}\right)$ for the whole net. Analogous notations
will be used for nets of smooth functions $u=(u_{\eps})\in\mathcal{C}^{\infty}(\Omega)^{I}$.

In each set of indices, we can define two notions of big-O for nets
of real numbers. These two big-Os share the same (usual) properties
of the classical one as preorders and concerning algebraic operations
(see \cite[Thm.~2.8, Thm.~2.14]{GiNi14}). Since each set of the form
$A_{\le e}=(\emptyset,e]\cap A$ is downward directed, the first big-O
is the usual one:
\begin{defn}
\label{def:usualBigOh}Let $\mathbb{I}=(I,\le,\mathcal{I})$ be a
set of indices. Let $a\in A\in\mathcal{I}$ and let $(x_{\eps})$,
$(y_{\eps})\in\R^{I}$ be two nets of real numbers defined in $I$.
We write
\begin{equation}
x_{\eps}=O_{a,A}(y_{\eps})\ \text{as }\eps\in\mathbb{I}\label{eq:usualBigOh}
\end{equation}
if
\begin{equation}
\exists H\in\R_{>0}\,\exists\eps_{0}\in A_{\le a}\,\forall\eps\in A_{\le\eps_{0}}:\ |x_{\eps}|\le H\cdot|y_{\eps}|.\label{eq:1stBigOhDef}
\end{equation}

\end{defn}
\noindent The second notion of big-O is the following:
\begin{defn}
\label{def:2ndBigOh}Let $\mathbb{I}=(I,\le\mathcal{I})$ be a set
of indices. Let $\mathcal{J}\subseteq\mathcal{I}$ be a non empty
subset of $\mathcal{I}$ such that
\begin{equation}
\forall A,B\in\mathcal{J}\,\exists C\in\mathcal{J}:\ C\subseteq A\cap B.\label{eq:hypJ}
\end{equation}
Finally, let $(x_{\eps})$, $(y_{\eps})\in\R^{I}$ be nets of real
numbers. Then we say
\[
x_{\eps}=O_{\mathcal{J}}(y_{\eps})\text{ as }\eps\in\mathbb{I}
\]
if
\[
\exists A\in\mathcal{J}\,\forall a\in A:\ x_{\eps}=O_{a,A}(y_{\eps}).
\]
We simply write $x_{\eps}=O(y_{\eps})$ (as $\eps\in\mathbb{I}$)
when $\mathcal{J=I}$, i.e.~to denote $x_{\eps}=O_{\mathcal{I}}(y_{\eps})$.
\end{defn}
For example, in case of the set of indices $\mathbb{I}^{\erm}$ used
for the full algebra, we have $x_{\eps}=O(y_{\eps})$ as $\eps\in\mathbb{I}^{\erm}$
if and only if $\exists q\in\N\,\forall\phi\in\mathcal{A}_{q}:\ x(\eps\odot\phi)=O\left[y(\eps\odot\phi)\right]$
as $\eps\to0^{+}$, where the latter big-O is the classical one, see
\cite[Thm. ~3.2]{GiNi14}. We can hence recognise an important part
of the usual definition of moderate and negligible nets for the full
algebra $\gse$. The abstract approach we use in this paper can be
easily understood by interpreting $\mathbb{I}$ in the simplest case
$\mathbb{I}^{\srm}$ of the special algebra and in the case $\mathbb{I}^{\erm}$
of the full algebra. In the former, any formula of the form $\exists A\in\mathcal{I}\,\forall a\in A$
becomes $\exists\eps_{0}\in(0,1]\,\forall\eps\in(0,\eps_{0}]$. In
the latter it becomes $\exists q\in\N\,\forall\phi\in\mathcal{A}_{q}$.

In every set of indices we can formalize the notion of \emph{for $\eps$
sufficiently small} as follows.
\begin{defn}
\label{def:epsSuffSmall}Let $\mathbb{I}=(I,\le,\mathcal{I})$ be
a set of indices. Let $a\in A\in\mathcal{I}$ and $\mathcal{P}(-)$
be a property, then we say
\[
\forall^{\mathbb{I}}\eps\in A_{\le a}:\ \mathcal{P}(\eps),
\]
and we read it \emph{for $\eps$ sufficiently small in $A_{\le a}$
the property $\mathcal{P}(\eps)$ holds}, if 
\begin{equation}
\exists e\in A_{\le a}\,\forall\eps\in A_{\le e}:\ \mathcal{P}(\eps).\label{eq:defSuffSmall}
\end{equation}
Note that, by condition \ref{enu:SoI-directed} of Def.~\ref{def:setOfIndices},
it follows that $A_{\le e}\ne\emptyset$, so that \eqref{eq:defSuffSmall}
is equivalent to
\[
\exists e\le a\,\forall\eps\in A_{\le e}:\ \mathcal{P}(\eps).
\]
Moreover, we say that
\[
\forall^{\mathbb{I}}\eps:\ \mathcal{P}(\eps),
\]
and we read it \emph{for $\eps$ sufficiently small in $\mathbb{I}$
the property $\mathcal{P}(\eps)$ holds, if $\exists A\in\mathcal{I}\,\forall a\in A\,\forall^{\mathbb{I}}\eps\in A_{\le a}:\ \mathcal{P}(\eps)$.}
\end{defn}
\noindent Using this notion, we can define an order relation for nets.
\begin{defn}
\label{def:order}Let $\mathbb{I}=(I,\le,\mathcal{I})$ be a set of
indices, and $i$, $j:I\ra\R$ be nets. Then we say $i>_{\mathbb{I}}j$
if
\[
\forall^{\mathbb{I}}\eps:\ i_{\eps}>j_{\eps}.
\]

\end{defn}
\noindent Finally, we recall the notion of limit of a net of real
numbers:
\begin{defn}
\label{def:limit}Let $\mathbb{I}=(I,\le,\mathcal{I})$ be a set of
indices, $f:I\ra\R$ a map, and $l\in\R\cup\{+\infty,-\infty\}$.
Then we say that \emph{$l$ is the limit of $f$ in $\mathbb{I}$}
if
\begin{equation}
\exists A\in\mathcal{I}\,\forall a\in A:\ l=\lim_{\eps\le a}f|_{A}(\eps),\label{eq:limitDef}
\end{equation}
where the limit \eqref{eq:limitDef} is taken in the downward directed
set $(\emptyset,a]=I_{\le a}$.
\end{defn}
Let us observe that if $l=\lim_{\eps\le a}f|_{A}(\eps)$ and $B\subseteq A$
, $B\in\mathcal{I}$ , then $l=\lim_{\eps\le a}f|_{B}(\eps)$; moreover,
there exists at most one $l$ verifying \eqref{eq:limitDef}.

\section{The Category $\textbf{Ind}$}

We start by defining the notion of morphism between two sets of indices.
This is also a natural step to define the concept of morphism of asymptotic
gauges. A natural property to expect from a morphism $f:\mathbb{I}_{1}\ra\mathbb{I}_{2}$
between sets of indices $\mathbb{I}_{1}$, $\mathbb{I}_{2}$ is the
preservation of the notion of ``eventually'' for properties $\mathcal{P}$,
i.e.~that $\forall^{\mathbb{I}_{1}}\varepsilon_{1}\,\mathcal{P}(\varepsilon_{1})$
implies $\forall^{\mathbb{I}_{2}}\varepsilon_{2}\,\mathcal{P}(f(\varepsilon_{2}))$.
Let us note that we start from a property $\mathcal{P}(\eps_{1})$,
for $\eps_{1}\in I_{1}$, and we want to arrive at a property $\mathcal{P}(f(\eps_{2}))$,
for $\eps_{2}\in I_{2}$.
\begin{defn}
\label{def:MorphInd}Let $\mathbb{I}_{k}=(I_{k},\leq_{k},\mathcal{I}_{k})$
be sets of indices for $k=1$, $2$. Let $a\in A\in\mathcal{I}_{1},\, b\in B\in\mathcal{I}_{2}$.
Then we say that $f:A_{\leq a}\longrightarrow B_{\leq b}$ is \emph{infinitesimal}
if 
\begin{enumerate}[leftmargin=*,label=(\roman*),align=left]
\item $f:I_{2}\longrightarrow I_{1}$;
\item $\forall\alpha\in A_{\leq a}\,\forall^{\mathbb{I}_{2}}\varepsilon_{2}\in B_{\leq b}:\ f(\varepsilon_{2})\in A_{\leq\alpha}$. 
\end{enumerate}

Moreover, we say that $f:\mathbb{I}_{1}\longrightarrow\mathbb{I}_{2}$
is a \textit{morphism} of sets of indices if
\[
\forall A\in\mathcal{I}_{1}\,\forall a\in A\,\exists B\in\mathcal{I}_{2}\,\forall b\in B:\ f:A_{\leq a}\longrightarrow B_{\leq b}\text{ is infinitesimal.}
\]

\end{defn}
\noindent Therefore, a morphism $f:\mathbb{I}_{1}\longrightarrow\mathbb{I}_{2}$
is a map in the opposite direction $f:I_{2}\longrightarrow I_{1}$
between the underlying sets. Only in this way we have that the map
$f$ preserves the asymptotic relations that hold in $\mathbb{I}_{1}$,
see Cor.~\ref{cor:morfind} for a list of examples.
\begin{example}
\label{exa:morphSetsOfIndices}$\,$
\begin{enumerate}[leftmargin=*,label=(\roman*),align=left]
\item For every set of indices $\mathbb{I}=(I,\leq,\mathcal{I})$ if $1_{\mathbb{I}}:I\longrightarrow I$
is the identity function then $1_{\mathbb{I}}:\mathbb{I\longrightarrow\mathbb{I}}$
is a morphism.
\item \label{enu:morph-I^s}Let $f:(0,1]\ra(0,1]$ be a map, then $f:\mathbb{I}^{\srm}\ra\mathbb{I}^{\srm}$
is a morphism if and only if $\forall\eps\in(0,1]\,\exists\delta\in(0,1]:\ f\left((0,\delta]\right)\subseteq(0,\eps]$,
i.e.~if and only if $\lim_{\eps\to0^{+}}f(\eps)=0$.
\item For the set of indices $\mathbb{I}^{\erm}$ of the full algebra, we
recall that $\left(\mathcal{A}_{q}\right)_{\le\phi}=(\emptyset,\phi]$
and $\underline{\phi}:=\min\left\{ \text{diam}(\text{supp}\phi),1\right\} $.
If $f:I^{\erm}\ra I^{\erm}$ is a map, then we have that $f:(\emptyset,\phi]\ra(\emptyset,\psi]$
is infinitesimal if and only if $\forall\eps\in(0,1]\,\exists\delta\in(0,1]:\ f\left(\left\{ r\odot\psi\mid r\in(0,\delta]\right\} \right)\subseteq\left\{ r\odot\phi\mid r\in(0,\eps]\right\} $.
Therefore, this implies that $\lim_{\eps\to0^{+}}\underline{f(\eps\odot\psi)}=0$.
If we denote by $\frac{f(\eps\odot\psi)}{\phi}$ the unique $r\in(0,1]$
such that $f(\eps\odot\psi)=r\odot\phi$ (in case it exists), then
$f:(\emptyset,\phi]\ra(\emptyset,\psi]$ is infinitesimal if and only
if $\lim_{\eps\to0^{+}}\frac{f(\eps\odot\psi)}{\phi}=0$. Moreover,
$f:\mathbb{I}^{\erm}\ra\mathbb{I}^{\erm}$ is a morphism if and only
if $\forall m\in\N\,\forall\phi\in\mathcal{A}_{m}\,\exists q\in\N\,\forall\psi\in\mathcal{A}_{q}:\ \lim_{\eps\to0^{+}}\frac{f(\eps\odot\psi)}{\phi}=0$.
This and the previous example justify our use of the name \emph{infinitesimal}
in Def.~\ref{def:MorphInd}.
\item \label{enu:phiFixed}Let $\varphi\in\mathcal{A}_{0}$ be fixed, let
$\mathbb{I}_{\varphi}:=((\emptyset,\varphi],\leq,\{(\emptyset,\varphi]\})$,
where the order relation on $\mathbb{I_{\varphi}}$ is the restriction
of the order relation on $\mathbb{I}^{\erm}$. If $f:(0,1]\longrightarrow(\emptyset,\varphi]$
is the function $f(r):=r\odot\varphi$ for every $r\in(0,1]$ then
we have that $f:\mathbb{I}_{\varphi}\longrightarrow\mathbb{I^{\srm}}$
is a morphism. Conversely, if $g:(\emptyset,\varphi]\longrightarrow(0,1]$
maps every $\psi\in(\emptyset,\varphi]$ to the unique $r\in(0,1]$
such that $\psi=r\odot\varphi$, i.e.~$g(\psi)=\frac{\psi}{\phi}$,
then $g:\mathbb{I}^{\srm}\longrightarrow\mathbb{I}_{\varphi}$ is
a morphism. We have that $f=g^{-1}$.
\item \label{enu:Nbar}Let us denote by $\overline{\mathbb{N}}$ the set
of indices $(\mathbb{N},\lesssim,\mathcal{I}_{n})$ where $\lesssim$
is the inverse of the usual order notion on $\mathbb{N}$ (namely,
$m\lesssim n$ iff $m\ge n$) and, for every natural number $n$,
$\mathcal{I}_{n}:=\{m\in\mathbb{N}\mid m\lesssim n\}.$ If $f:\mathbb{N}\rightarrow(0,1]$
is the function that maps $n$ to $\frac{1}{n+1}$, we have that $f:\mathbb{I}^{\srm}\longrightarrow\overline{\mathbb{N}}$
is a morphism. Conversely, if $g:(0,1]\longrightarrow\mathbb{N}$
is the function that maps $\varepsilon$ to the floor $\left\lfloor \frac{1}{\varepsilon}\right\rfloor $
then $g:\overline{\mathbb{N}}\longrightarrow\mathbb{I}^{\srm}$ is
a morphism. 
\item \label{enu:n-to-phi_n}For every $n\in\mathbb{N}$ let us fix $\varphi_{n}\in\mathcal{A}_{n}\setminus\mathcal{A}_{n+1}$.
Let $f:\mathbb{N}\longrightarrow\mathcal{A}_{0}$ be the function
that maps $n$ to $\varphi_{n}$. Then we have that $f:\widetilde{\mathbb{I}}^{\erm}\longrightarrow\overline{\mathbb{N}}$
is a morphism. Conversely, if $o:\mathcal{A}_{0}\longrightarrow\overline{\mathbb{N}}$
is the function that maps $\varphi$ to $o(\varphi)$ (see \ref{enu:I-tilde-e}
in Example~\ref{exa:setsIndices}) then $o:\overline{\mathbb{N}}\longrightarrow\widetilde{\mathbb{I}}^{\erm}$
is a morphism. 
\end{enumerate}
\end{example}
\begin{lem}
\label{lem:1stComposition}Let $\mathbb{I}_{k}=(I_{k},\leq_{k},\mathcal{I}_{k})$
be sets of indices for $k=1$, $2$, $3$. Let $a\in A\in\mathcal{I}_{1},\, b\in B\in\mathcal{I}_{2}$
and $c\in C\in\mathcal{I}_{3}$. Then if $f:A_{\le a}\ra B_{\le b}$
and $g:B_{\le b}\ra C_{\le c}$ are infinitesimals, also the composition
$f\circ g:A_{\le a}\ra C_{\le c}$ is infinitesimal.\end{lem}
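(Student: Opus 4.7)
The plan is to chase Definition~\ref{def:MorphInd}. Infinitesimality of $f\circ g:A_{\le a}\to C_{\le c}$ amounts to two requirements: first, that as an actual function $f\circ g$ has domain $I_3$ and codomain $I_1$, which is automatic from $g:I_3\to I_2$ and $f:I_2\to I_1$; and second, the key condition
\[
\forall\alpha\in A_{\le a}\,\forall^{\mathbb{I}_3}\varepsilon_3\in C_{\le c}:\ f(g(\varepsilon_3))\in A_{\le\alpha},
\]
i.e.\ for every $\alpha\in A_{\le a}$ there exists $e''\in C_{\le c}$ such that $f(g(\varepsilon_3))\in A_{\le\alpha}$ for all $\varepsilon_3\in C_{\le e''}$.

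To verify this, I would fix an arbitrary $\alpha\in A_{\le a}$ and proceed ``from the outside in''. First I would invoke the infinitesimality of $f$: condition~(ii) of Def.~\ref{def:MorphInd} applied to the prescribed $\alpha$ produces some $e'\in B_{\le b}$ with $f(\varepsilon_2)\in A_{\le\alpha}$ for every $\varepsilon_2\in B_{\le e'}$. Then I would invoke the infinitesimality of $g$, but now with this freshly chosen $e'$ playing the role of the target parameter: condition~(ii) applied to $\alpha':=e'\in B_{\le b}$ yields some $e''\in C_{\le c}$ with $g(\varepsilon_3)\in B_{\le e'}$ for all $\varepsilon_3\in C_{\le e''}$. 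Chaining the two implications, for every such $\varepsilon_3$ the element $\varepsilon_2:=g(\varepsilon_3)$ lies in $B_{\le e'}$, whence $f(g(\varepsilon_3))\in A_{\le\alpha}$, which is exactly what was required.

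There is no genuine obstacle in the argument; its content reduces to two nested applications of the unwound definition. The only point demanding care is the contravariant interpretation of the arrow notation $A_{\le a}\to B_{\le b}$ (the underlying function runs $I_2\to I_1$), which dictates the order in which the two infinitesimality hypotheses must be used—first $f$, to pull the constraint $\alpha\in A_{\le a}$ back to a constraint $e'\in B_{\le b}$, and only then $g$, to pull that further back to $e''\in C_{\le c}$.
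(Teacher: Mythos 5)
Your proof is correct and follows exactly the same route as the paper's: fix $\alpha\in A_{\le a}$, apply the infinitesimality of $f$ to obtain a threshold $e'\in B_{\le b}$, then feed that $e'$ into the infinitesimality of $g$ to obtain a threshold in $C_{\le c}$, and chain the two inclusions. Your remark about the contravariant direction of the underlying maps dictating the order of the two applications is also the right point of care.
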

\begin{proof}
By definition, for every $\alpha\in A_{\le a}$ there exists $\delta_{2}\leq b$
such that $f(\eps_{2})\in A_{\le\alpha}$ for every $\eps_{2}\in B$
such that $\eps_{2}\le\delta_{2}\le b$. But $g:B_{\le b}\ra C_{\le c}$
infinitesimal means
\[
\forall\beta\in B_{\le b}\,\forall^{\mathbb{I}_{3}}\eps_{3}\in C_{\le c}:\ g(\eps_{3})\in B_{\le\beta}.
\]
We apply this property with $\beta=\delta_{2}$ to get $g(\eps_{3})\in B_{\le\delta_{2}}$
for every $\eps_{3}\in C_{\le c}$ sufficiently small, let us say
for each $\eps_{3}\le\delta_{3}\le c$. Therefore $f(g(\eps_{3}))\in A_{\le\alpha}$
for every $\eps_{3}\in C_{\le c}$ such that $\eps_{3}\le\delta_{3}$.
\end{proof}
The following results motivate our definition of morphism of sets
of indices.
\begin{lem}
\label{lem:MorphProp}In the assumptions of Def.~\ref{def:MorphInd},
let $f:A_{\leq a}\longrightarrow B_{\leq b}$ be infinitesimal, and
let $\mathcal{P}(\varepsilon_{1})$ be a given property of $\eps_{1}\in I_{1}$.
If $\forall^{\mathbb{I}_{1}}\varepsilon_{1}\in A_{\leq a}\,\mathcal{P}(\varepsilon_{1})$
then $\forall^{\mathbb{I}_{2}}\varepsilon_{2}\in B_{\leq b}\,\mathcal{P}(f(\varepsilon_{2}))$.\end{lem}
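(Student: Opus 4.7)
The plan is to unfold the two "for sufficiently small" quantifiers explicitly using Def.~\ref{def:epsSuffSmall} and then to feed the witness coming from the hypothesis on $\mathcal{P}$ into the infinitesimality condition on $f$.

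First I would translate the hypothesis $\forall^{\mathbb{I}_1}\eps_1\in A_{\leq a}\,\mathcal{P}(\eps_1)$ into its defining form: there exists some $\alpha\in A_{\leq a}$ such that $\mathcal{P}(\eps_1)$ holds for every $\eps_1\in A_{\leq\alpha}$. This $\alpha$ is the crucial witness to be carried through the rest of the argument.

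Next, I would invoke the infinitesimality of $f:A_{\leq a}\longrightarrow B_{\leq b}$ at this specific $\alpha$. By Def.~\ref{def:MorphInd}(ii), this gives $\forall^{\mathbb{I}_2}\eps_2\in B_{\leq b}:\ f(\eps_2)\in A_{\leq\alpha}$; unfolding the quantifier once more, there is some $\beta\in B_{\leq b}$ such that $f(\eps_2)\in A_{\leq\alpha}$ for every $\eps_2\in B_{\leq\beta}$.

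Combining these two pieces: for every $\eps_2\in B_{\leq\beta}$, we have $f(\eps_2)\in A_{\leq\alpha}$, and therefore $\mathcal{P}(f(\eps_2))$ holds by the choice of $\alpha$. This is precisely the statement $\forall^{\mathbb{I}_2}\eps_2\in B_{\leq b}\,\mathcal{P}(f(\eps_2))$, with witness $\beta$. There is no real obstacle here: the lemma is essentially a bookkeeping exercise, confirming that the reverse direction of $f$ on the underlying sets (mapping $I_2\to I_1$) is exactly what is needed to pull back "eventually true" properties from $\mathbb{I}_1$ to $\mathbb{I}_2$. The only point deserving a second of care is to remember that both $A_{\leq\alpha}$ and $B_{\leq\beta}$ are nonempty by Def.~\ref{def:setOfIndices}\ref{enu:SoI-directed}, so the quantifier unfoldings are legitimate.
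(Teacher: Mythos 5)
Your proof is correct and follows exactly the paper's own argument: extract the witness $\alpha$ from the hypothesis, feed it into the infinitesimality condition of $f$ to obtain a witness $\beta$ in $B_{\leq b}$, and conclude. Nothing further is needed.
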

\begin{proof}
Let $e_{1}\in A_{\leq a}$ be such that $\mathcal{P}(\varepsilon_{1})$
holds for all $\varepsilon_{1}\in A_{\leq e_{1}}$. Since $f:A_{\leq a}\longrightarrow B_{\leq b}$
is infinitesimal, there exists $e_{2}\in B_{\leq b}$ be such that
$f(\varepsilon_{2})\in A_{\leq e_{1}}$ for all $\varepsilon_{2}\in B_{\leq e_{2}}$.
Therefore $\mathcal{P}(f(\varepsilon_{2}))$ holds for all $\varepsilon_{2}\in B_{\leq e_{2}}$.\end{proof}
\begin{thm}
\label{thm:MorphProp}Let $\mathbb{I}_{k}=(I_{k},\leq_{k},\mathcal{I}_{k})$
be sets of indices for $k=1,2$. Let $f:\mathbb{I}_{1}\longrightarrow\mathbb{I}_{2}$
be a morphism of sets of indices and let $\mathcal{P}(\varepsilon_{1})$
be a given property of $\eps_{1}\in I_{1}$. If $\forall^{\mathbb{I}_{1}}\varepsilon_{1}\,\mathcal{P}(\varepsilon_{1})$
then $\forall^{\mathbb{I}_{2}}\varepsilon_{2}\,\mathcal{P}(f(\varepsilon_{2}))$.\end{thm}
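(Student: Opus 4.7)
The plan is to unfold both sides of the statement according to Definition \ref{def:epsSuffSmall} and then use the morphism condition combined with Lemma \ref{lem:MorphProp} to pass from the $\mathbb{I}_1$-quantifier to the $\mathbb{I}_2$-quantifier.

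First I would unpack the hypothesis: $\forall^{\mathbb{I}_1}\varepsilon_1\,\mathcal{P}(\varepsilon_1)$ yields, by definition, some $A\in\mathcal{I}_1$ such that for every $a\in A$ one has $\forall^{\mathbb{I}_1}\varepsilon_1\in A_{\le a}:\mathcal{P}(\varepsilon_1)$. Since $\emptyset\notin\mathcal{I}_1$, the set $A$ is nonempty, so I can fix some $a_0\in A$.

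Next I would apply the definition of morphism (Def.~\ref{def:MorphInd}) to $A\in\mathcal{I}_1$ and $a_0\in A$: this produces some $B\in\mathcal{I}_2$ such that for every $b\in B$ the restriction $f:A_{\le a_0}\longrightarrow B_{\le b}$ is infinitesimal. This $B$ is the candidate witness for $\forall^{\mathbb{I}_2}\varepsilon_2\,\mathcal{P}(f(\varepsilon_2))$.

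To close the argument, I would fix an arbitrary $b\in B$ and invoke Lemma~\ref{lem:MorphProp} with the infinitesimal map $f:A_{\le a_0}\longrightarrow B_{\le b}$ and the property $\mathcal{P}$: since $\forall^{\mathbb{I}_1}\varepsilon_1\in A_{\le a_0}:\mathcal{P}(\varepsilon_1)$ holds (by the choice of $A$ applied to $a_0$), the lemma gives $\forall^{\mathbb{I}_2}\varepsilon_2\in B_{\le b}:\mathcal{P}(f(\varepsilon_2))$. As $b\in B$ was arbitrary, this is precisely the unfolded form of $\forall^{\mathbb{I}_2}\varepsilon_2\,\mathcal{P}(f(\varepsilon_2))$, and the theorem follows. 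No real obstacle is expected: the proof is essentially a bookkeeping exercise matching the three-layer quantifier structure of $\forall^{\mathbb{I}}$ with the two-layer structure of the morphism definition; the only subtlety is recognising that the choice of a single $a_0\in A$ suffices, because the conclusion only requires the existence of one $B\in\mathcal{I}_2$, not a choice of $B$ for each $a\in A$.
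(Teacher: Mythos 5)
Your proof is correct and follows essentially the same route as the paper's: unpack the hypothesis to get $A$, pick one $a\in A\neq\emptyset$, use the morphism condition to produce $B$, and apply Lemma~\ref{lem:MorphProp} for each $b\in B$. The remark that a single $a_0$ suffices is exactly the observation the paper makes implicitly.
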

\begin{proof}
Let $A\in\mathcal{I}_{1}$ be such that $\forall a\in A\,\forall^{\mathbb{I}_{1}}\varepsilon_{1}\in A_{\leq a}\,\mathcal{P}(\varepsilon_{1})$
holds. Since $\emptyset\notin\mathcal{I}_{1}$, there exists $a\in A$.
But $f:\mathbb{I}_{1}\longrightarrow\mathbb{I}_{2}$ is a morphism,
so there exists $B\in\mathcal{I}_{2}$ such that $f:A_{\leq a}\longrightarrow B_{\leq b}$
is infinitesimal for all $b\in B$. By Lemma~\ref{lem:MorphProp},
we deduce that $\forall^{\mathbb{I}_{2}}\varepsilon_{2}\in B_{\leq b}\,\mathcal{P}(f(\varepsilon_{2}))$,
which is our conclusion.
\end{proof}
Three simple consequences of Theorem \ref{thm:MorphProp} are presented
in the following Corollary.
\begin{cor}
\label{cor:morfind}Let $\mathbb{I}_{k}=(I_{k},\leq_{k},\mathcal{I}_{k})$
be sets of indices for $k=1,2$. If $f:\,\mathbb{I}_{1}\longrightarrow\mathbb{I}_{2}$
is a morphism of sets of indices, then the following properties hold:
\begin{enumerate}[leftmargin=*,label=(\roman*),align=left]
\item \label{enu:morphismsPreserveOrder}If $i>_{\mathbb{I}_{1}}j$ then
$i\circ f>_{\mathbb{I}_{2}}j\circ f$;
\item \label{enu:morphismsPreserveBigO}If $x_{\varepsilon_{1}}=O(y_{\varepsilon_{1}})$
as $\eps_{1}\in\mathbb{I}_{1}$, then $x_{f(\varepsilon_{2})}=O(y_{f(\varepsilon_{2})})$
as $\eps_{2}\in\mathbb{I}_{2}$;
\item \label{enu:morphismsPreserveLim}For every net $g:I_{1}\longrightarrow\mathbb{R}$
if $l=\lim_{\mathbb{I}_{1}}g$ then $l=\lim_{\mathbb{I}_{2}}g\circ f$.
\end{enumerate}
\end{cor}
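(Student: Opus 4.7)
The plan is to derive each item from Theorem~\ref{thm:MorphProp} (or its underlying Lemma~\ref{lem:MorphProp}) by repackaging the hypothesis as an ``eventually'' statement $\forall^{\mathbb{I}_1}\eps:\mathcal{P}(\eps)$, transporting it across $f$, and reading off the conclusion on $\mathbb{I}_2$.

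Item~(i) is immediate: by Def.~\ref{def:order}, $i>_{\mathbb{I}_1}j$ is literally $\forall^{\mathbb{I}_1}\eps:\ i_\eps>j_\eps$, so Theorem~\ref{thm:MorphProp} applied to $\mathcal{P}(\eps_1):=(i_{\eps_1}>j_{\eps_1})$ yields $\forall^{\mathbb{I}_2}\eps_2:\ i_{f(\eps_2)}>j_{f(\eps_2)}$, which is exactly $i\circ f>_{\mathbb{I}_2}j\circ f$.

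For~(ii), unfold Def.~\ref{def:2ndBigOh}: there exists $A\in\mathcal{I}_1$ such that for every $a\in A$ one can find $H_a>0$ with $\forall^{\mathbb{I}_1}\eps\in A_{\le a}:\ |x_\eps|\le H_a|y_\eps|$. Pick a single $a_0\in A$ (possible since $\emptyset\notin\mathcal{I}_1$) and freeze $H:=H_{a_0}$. The morphism condition applied to $A$ and $a_0$ supplies $B\in\mathcal{I}_2$ such that $f:A_{\le a_0}\ra B_{\le b}$ is infinitesimal for every $b\in B$, and Lemma~\ref{lem:MorphProp} applied to $\mathcal{P}(\eps):=(|x_\eps|\le H|y_\eps|)$ then gives $\forall^{\mathbb{I}_2}\eps_2\in B_{\le b}:\ |x_{f(\eps_2)}|\le H|y_{f(\eps_2)}|$ for each $b\in B$; this is $x_{f(\eps_2)}=O_{b,B}(y_{f(\eps_2)})$ and hence $x_{f(\eps_2)}=O(y_{f(\eps_2)})$ as $\eps_2\in\mathbb{I}_2$.

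Item~(iii) proceeds by the same template applied to each value of the approximation parameter. In the case $l\in\R$, Def.~\ref{def:limit}, combined with condition~(iv) of Def.~\ref{def:setOfIndices} to pass between the strict-$<$ limit quantifier and $\forall^{\mathbb{I}_1}\eps\in A_{\le a}$, rewrites the hypothesis as: there exists $A\in\mathcal{I}_1$ such that for every $a\in A$ and every $\delta>0$, $\forall^{\mathbb{I}_1}\eps\in A_{\le a}:\ |g(\eps)-l|<\delta$. Fix $A$ and a single $a_0\in A$ once and for all, before any quantification over $\delta$, and use the morphism condition to obtain $B\in\mathcal{I}_2$ as in~(ii). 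For each $\delta>0$, Lemma~\ref{lem:MorphProp} applied to $\mathcal{P}_\delta(\eps):=(|g(\eps)-l|<\delta)$ yields $\forall^{\mathbb{I}_2}\eps_2\in B_{\le b}:\ |g(f(\eps_2))-l|<\delta$ for every $b\in B$, which re-translates back into $l=\lim_{\eps\le b}(g\circ f)|_B(\eps)$ for every $b\in B$, hence $l=\lim_{\mathbb{I}_2}g\circ f$. The cases $l=\pm\infty$ follow identically, with the properties $g(\eps)>N$ or $g(\eps)<-N$ in place of $\mathcal{P}_\delta$. The only subtle point throughout is that in~(ii) the $O$-constant $H$ and in~(iii) the tolerance $\delta$ must not each force a different $B\in\mathcal{I}_2$; fixing $a_0$ at the outset makes $B$ uniform, which is exactly what the big-O and limit definitions on $\mathbb{I}_2$ require.
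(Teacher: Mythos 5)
Your proposal is correct and follows essentially the same route as the paper: item (i) is read off from Theorem~\ref{thm:MorphProp}, and items (ii) and (iii) are obtained by fixing a single $a_0\in A$ (hence a single constant $H$, resp.\ a single $B$ uniform in $\delta$) and transporting the resulting ``eventually'' statements through Lemma~\ref{lem:MorphProp}. Your explicit treatment of (iii), which the paper leaves as ``the same ideas,'' and your remark that $B$ must be chosen before quantifying over $H$ or $\delta$ are both accurate and consistent with the intended argument.
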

\begin{proof}
Property \ref{enu:morphismsPreserveOrder} follows directly from Thm.~\ref{thm:MorphProp}
because $i>_{\mathbb{I}_{1}}j$ means $\forall^{\mathbb{I}}\eps:\ i_{\eps}>j_{\eps}$.
To prove \ref{enu:morphismsPreserveBigO}, let $A\in\mathcal{I}_{1}$
be such that $x_{\eps_{1}}=O_{A,a}(y_{\eps_{1}})$ for all $a\in A$.
Therefore, there exists $H\in\R_{>0}$ such that $\forall^{\mathbb{I}_{1}}\eps_{1}\in A_{\le a}\ \left|x_{\eps_{1}}\right|\le H\cdot\left|y_{\eps_{1}}\right|$.
But $A\neq\emptyset$, so we can pick $a\in A$, and $f:\,\mathbb{I}_{1}\longrightarrow\mathbb{I}_{2}$
yields the existence of $B\in\mathcal{I}_{2}$ such that $f:A_{\le a}\ra B_{\le b}$
is infinitesimal for all $b\in B$. By Lemma~\ref{lem:MorphProp}
we get $\forall^{\mathbb{I}_{2}}\eps_{2}\in B_{\le b}\ \left|x_{f\left(\eps_{2}\right)}\right|\le H\cdot\left|y_{f\left(\eps_{2}\right)}\right|$,
from which the conclusion follows. Using the same ideas, we can prove
\ref{enu:morphismsPreserveLim}.\end{proof}
\begin{thm}
\label{thm:IndCategory}The class of all sets of indices together
with their morphisms form a category $\mathbf{Ind}$.\end{thm}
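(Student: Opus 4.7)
The plan is to verify the three axioms defining a category: existence of a composition law, existence of identity morphisms, and associativity. The one conceptual wrinkle, flagged by the authors immediately after Def.~\ref{def:MorphInd}, is that a morphism $f:\mathbb{I}_1\to\mathbb{I}_2$ in $\mathbf{Ind}$ is in fact a set-theoretic map $f:I_2\to I_1$ going the other way. Accordingly, I would declare the categorical composite of $f:\mathbb{I}_1\to\mathbb{I}_2$ and $g:\mathbb{I}_2\to\mathbb{I}_3$ to be the set-theoretic map $f\circ g:I_3\to I_1$ (with the arguments composed in the reverse order). With this convention, associativity and the identity laws in $\mathbf{Ind}$ will reduce to the corresponding facts for composition of functions between sets.

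The bulk of the proof is the verification that this composite is again a morphism in the sense of Def.~\ref{def:MorphInd}. To check this, I would fix $A\in\mathcal{I}_1$ and $a\in A$. Since $f:\mathbb{I}_1\to\mathbb{I}_2$ is a morphism, there exists $B\in\mathcal{I}_2$ with the property that $f:A_{\le a}\to B_{\le b}$ is infinitesimal for every $b\in B$. Because $\emptyset\notin\mathcal{I}_2$ by condition~(ii) of Def.~\ref{def:setOfIndices}, some $b\in B$ can be picked. Applying the morphism property of $g$ to this particular $B$ and $b$ then produces $C\in\mathcal{I}_3$ such that $g:B_{\le b}\to C_{\le c}$ is infinitesimal for every $c\in C$. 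By Lemma~\ref{lem:1stComposition}, the map $f\circ g:A_{\le a}\to C_{\le c}$ is therefore infinitesimal for every $c\in C$, which is exactly the morphism condition needed for the composite $\mathbb{I}_1\to\mathbb{I}_3$.

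For identities, Example~\ref{exa:morphSetsOfIndices}(i) already records that the set-theoretic identity $1_I:I\to I$ defines a morphism $\mathbb{I}\to\mathbb{I}$. The identity laws and associativity then translate immediately into the trivial facts that $1\circ u=u=u\circ 1$ and $u\circ(v\circ w)=(u\circ v)\circ w$ for functions between the underlying sets, since both are inherited from set-theoretic composition (the order-reversal is compatible with both). The only non-routine ingredient is Lemma~\ref{lem:1stComposition}, so no substantial obstacle is expected; essentially the whole argument amounts to the quantifier juggling above, with the non-emptiness $B\ne\emptyset$ playing the indispensable role of providing the intermediate element $b\in B$ at which the morphism conditions for $f$ and $g$ are stitched together.
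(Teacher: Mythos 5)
Your proposal is correct and follows essentially the same route as the paper: the paper likewise reduces everything to the composition axiom, picks an element $b$ of the (non-empty) intermediate set $B$ supplied by the morphism property of $f$, feeds it into the morphism property of $g$, and concludes via Lemma~\ref{lem:1stComposition}. The identity and associativity checks are handled exactly as you indicate, by reduction to set-theoretic composition.
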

\begin{proof}
The only non-trivial property to prove concerns composition, namely
that for every pair of arrows $\mathbb{I}_{1}\overset{f}{\longrightarrow}\mathbb{I}_{2},\,\mathbb{I}_{2}\overset{g}{\longrightarrow}\mathbb{I}_{3}$,
we have that $\mathbb{I}_{1}\overset{f\circ g}{\longrightarrow}\mathbb{I}_{3}$
is a morphism of set of indices. By our hypotheses we know that:
\begin{align}
\forall A & \in\mathcal{I}_{1}\,\forall a\in A\,\exists B\in\mathcal{I}_{2}\,\forall b\in B:\ f:A_{\leq a}\ra B_{\leq b}\text{ is infinitesimal};\label{eq:Orribile1}\\
\forall B & \in\mathcal{I}_{2}\,\forall b\in B\,\exists C\in\mathcal{I}_{3}\,\forall c\in C:\ g:B_{\leq b}\ra C_{\leq c}\text{ is infinitesimal}.\label{eq:Orribile2}
\end{align}

\noindent For $a\in A\in\mathcal{I}_{1}$, from \eqref{eq:Orribile1}
we get a non empty $B\in\mathcal{I}_{2}$. Take any element $b\in B$,
so that \eqref{eq:Orribile2} yields the existence of $C\in\mathcal{I}_{3}$.
For $c\in C$, both \eqref{eq:Orribile1} and \eqref{eq:Orribile2}
give that $f:A_{\le a}\ra B_{\le b}$ and $g:B_{\le b}\ra C_{\le c}$
are infinitesimal, and the conclusion follows from Lemma~\ref{lem:1stComposition}.
\end{proof}

\subsection{\label{sub:Downward-directed-sets}Downward directed and segmented
sets of indices}

In this section, we study suitable classes of sets of indices where
the notion of morphism of the category $\textbf{Ind}$ simplifies.
\begin{defn}
Let $\mathbb{I}=(I,\leq,\mathcal{I})$ be a set of indices, then we
say that
\begin{enumerate}[leftmargin=*,label=(\roman*),align=left]
\item $\mathbb{I}$ is \emph{segmented} if $\forall A\in\mathcal{I}\,\exists a:\ (\emptyset,a]\subseteq A$;
\item $\mathbb{I}$ is \emph{downward directed} if $(I,\le)$ is downward
directed, i.e.~for every $a$, $b\in I$ there exists $c\in I$ such
that $c\leq a,$ $c\leq b$.
\end{enumerate}

\noindent Moreover, if $\mathbb{I}$ is downward directed, we call
\emph{canonical} \emph{set of indices} generated by $\mathbb{I}$,
and we denote it by $\overline{\mathbb{I}}$, the set of indices $\overline{\mathbb{I}}=(I,\leq,\mathcal{S}_{I}),$
where 
\[
\mathcal{S}_{I}:=\{(\emptyset,a]\mid a\in I\}\cup\{I\}.
\]

\noindent Since $(I,\leq)$ is downward directed, it is immediate
to prove that $\overline{\mathbb{I}}$ is a set of indices. \end{defn}
\begin{example}
~
\begin{enumerate}[leftmargin=*,label=(\roman*),align=left]
\item If $\mathbb{I}=\mathbb{I}^{\srm}$ then $\overline{\mathbb{I}}=\mathbb{I}$.
\item If $\mathbb{I}=\widetilde{\mathbb{I}}^{\erm}$ then then $\overline{\mathbb{I}}\neq\mathbb{I}.$ 
\end{enumerate}
\end{example}
As mentioned above, the notion of morphism is simplified when we work
with this type of sets of indices.
\begin{thm}
\label{thm:downdir}Let $\mathbb{I}_{1},\mathbb{I}_{2}$ be sets of
indices and let $f:I_{2}\ra I_{1}$ be a map. Let us assume that $\mathbb{I}_{1}$
is segmented and $\mathbb{I}_{2}$ is downward directed. Then the
following conditions are equivalent:
\begin{enumerate}[leftmargin=*,label=(\roman*),align=left]
\item \label{enu:downdir1}$f:\mathbb{I}_{1}\longrightarrow\mathbb{I}_{2}$
is a morphism of sets of indices;
\item \label{enu:downdir2}$\forall a\in I_{1}\,\exists b\in I_{2}:\ f((\emptyset,b])\subseteq(\emptyset,a]$;
\item \label{enu:downdir3}$\forall a\in I_{1}\,\forall b\in I_{2}:\ f:\left(I_{1}\right)_{\leq a}\longrightarrow\left(I_{2}\right)_{\leq b}$
is infinitesimal.
\end{enumerate}
\end{thm}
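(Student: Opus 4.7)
The plan is to prove the cyclic chain $(\text{ii})\Rightarrow(\text{iii})\Rightarrow(\text{i})\Rightarrow(\text{ii})$. The common thread is that downward directedness of $\mathbb{I}_2$ lets me merge finitely many smallness thresholds in $I_2$ into a single one and collapse every nested $\forall^{\mathbb{I}_2}\eps\in(I_2)_{\le b}$ to a plain $\exists e\le b\,\forall\eps\le e$, while segmentedness of $\mathbb{I}_1$ is what upgrades an inequality ``$f(\eps)\le\alpha$'' to the set-membership condition ``$f(\eps)\in A_{\le\alpha}$'' in Definition~\ref{def:MorphInd}.

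For $(\text{ii})\Rightarrow(\text{iii})$, fix $a\in I_1$, $b\in I_2$ and $\alpha\in(I_1)_{\le a}$. Applying (ii) with $\alpha$ in place of $a$ yields $b_\alpha\in I_2$ with $f((\emptyset,b_\alpha])\subseteq(\emptyset,\alpha]$; downward directedness of $I_2$ then produces $e\in I_2$ with $e\le b$ and $e\le b_\alpha$, which is exactly the witness required by the infinitesimality of $f\colon(I_1)_{\le a}\to(I_2)_{\le b}$. For $(\text{iii})\Rightarrow(\text{i})$, given $A\in\mathcal{I}_1$ and $a\in A$, I take $B:=I_2\in\mathcal{I}_2$ in the morphism definition. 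For $b\in I_2$ and $\alpha\in A_{\le a}$, segmentedness supplies $a^{*}\in I_1$ with $(\emptyset,a^{*}]\subseteq A$; applying (iii) once with parameter $\alpha$ and once with parameter $a^{*}$, and then using downward directedness of $I_2$ to merge the two resulting thresholds with $b$, I obtain a single $e\le b$ such that $f(\eps)\le\alpha$ and $f(\eps)\in(\emptyset,a^{*}]\subseteq A$ hold simultaneously for every $\eps\in(I_2)_{\le e}$, i.e.~$f(\eps)\in A_{\le\alpha}$.

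The principal obstacle is $(\text{i})\Rightarrow(\text{ii})$. Given $a\in I_1$, I apply (i) to $A=I_1\in\mathcal{I}_1$ and to $a$ in order to obtain some $B\in\mathcal{I}_2$; picking any $b_0\in B$ (nonempty by condition (ii) of Definition~\ref{def:setOfIndices}) and specializing the infinitesimality of $f\colon(\emptyset,a]\to B_{\le b_0}$ to $\alpha=a$ gives $e\in B_{\le b_0}$ with $f(\eps)\le a$ for every $\eps\in B_{\le e}$. The subtlety is that this statement a priori constrains $f$ only on $B$, while (ii) needs control along an initial segment of the whole $I_2$; my plan is to exploit condition (iv) of Definition~\ref{def:setOfIndices} for $B$, which forces $B_{\le e'}$ to be nonempty and downward directed by $<$ for every $e'\le e$, and combine it with the downward directedness of $(I_2,\le)$ to select $b\in I_2$ below $e$ for which every $\eps\le b$ is dominated in $I_2$ by an element of $B_{\le e}$, thereby transferring the inequality $f(\eps)\le a$ from the morphism condition on $B$ to an inequality on the full segment $(\emptyset,b]\subseteq I_2$.
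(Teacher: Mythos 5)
Your steps (ii)$\Rightarrow$(iii) and (iii)$\Rightarrow$(i) are correct and essentially identical in content to the paper's (which runs the cycle in the order (i)$\Rightarrow$(ii)$\Rightarrow$(iii)$\Rightarrow$(i)): in both cases one merges finitely many thresholds using downward directedness of $(I_2,\le)$, and uses segmentedness of $\mathbb{I}_1$ to upgrade the inequality $f(\eps)\le\alpha$ to the membership $f(\eps)\in A_{\le\alpha}$.

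The problem is (i)$\Rightarrow$(ii), and your proposed repair does not close the gap you yourself identify. Applying the morphism condition with $A=I_1$ and $\alpha=a$ yields some $B\in\mathcal{I}_2$, some $b_0\in B$ and some $e\in B_{\le b_0}$ with $f(\eps_2)\in(\emptyset,a]$ for all $\eps_2\in B_{\le e}=(\emptyset,e]\cap B$; as you note, this controls $f$ only on points of $B$, whereas (ii) requires control on a full initial segment of $I_2$. Your plan is to choose $b$ so that every $\eps\le b$ is dominated in $I_2$ by some $\beta\in B_{\le e}$ and then to ``transfer'' the bound from $f(\beta)$ to $f(\eps)$. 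But $f$ is an arbitrary map of the underlying sets, with no monotonicity with respect to the preorders: from $\eps\le\beta$ and $f(\beta)\in(\emptyset,a]$ one can conclude nothing about $f(\eps)$ unless $\eps$ itself belongs to $B$. So condition (iv) of Def.~\ref{def:setOfIndices} together with downward directedness of $(I_2,\le)$ cannot manufacture the required $b$ with $f((\emptyset,b])\subseteq(\emptyset,a]$; what is actually needed is that the witness $B$ may be taken to contain an initial segment $(\emptyset,b]$ of $I_2$ (automatic if, e.g., $\mathbb{I}_2$ is also segmented, or if the morphism condition can always be witnessed by $B=I_2$). For what it is worth, the paper's own proof of this implication glosses over exactly the same point: it asserts ``$f(\eps_2)\in(\emptyset,a]$ for all $\eps_2\in(\emptyset,b]$'' where Def.~\ref{def:epsSuffSmall} only delivers this for $\eps_2\in B_{\le b}$. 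So you have put your finger on a genuine weak spot of the argument, but the attempt as written does not supply a valid proof of (i)$\Rightarrow$(ii), and without that step your cycle does not close.
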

\begin{proof}
To prove that \ref{enu:downdir1} entails \ref{enu:downdir2}, let
$f:\mathbb{I}_{1}\longrightarrow\mathbb{I}_{2}$ be a morphism and
let $a\in I_{1}$. Setting $A=I_{1}$ in the definition of morphism,
we get the existence of $B\in\mathcal{I}_{2}$ such that $f:A_{\le a}\ra B_{\le\bar{b}}$
is infinitesimal for each $\bar{b}\in B$. Take any $\bar{b}\in B\ne\emptyset$.
Setting $\alpha=a$ in the definition of infinitesimal (Def.~\ref{def:MorphInd}),
we get the existence of $b\in B_{\le\bar{b}}\subseteq I_{2}$ such
that $f(\eps_{2})\in(\emptyset,a]$ for all $\eps_{2}\in(\emptyset,b]$,
which is our conclusion. 

\noindent To prove that \ref{enu:downdir2} entails \ref{enu:downdir3},
let $a\in I_{1},\, b\in I_{2}$ and let $\overline{b}\in I_{2}$ be
such that $f(\emptyset,\overline{b}]\subseteq(\emptyset,a]$. Let
$\alpha\in(\emptyset,a]$ and let $\widetilde{b}\in B_{\leq\overline{b}}$
be such that $f(\emptyset,\widetilde{b}]\subseteq(\emptyset,\alpha]$.
Since $(I_{2},\le)$ is downward directed, we can find $\beta\in I_{2}$
such that $\beta\leq b,\,\beta\leq\widetilde{b}.$ By construction,
$f(\emptyset,\beta]\subseteq(\emptyset,\alpha]$ and $(\emptyset,\beta]\subseteq(\emptyset,b]=\left(I_{2}\right)_{\leq b}.$
Therefore $f:\left(I_{1}\right)_{\leq a}\longrightarrow\left(I_{2}\right)_{\leq b}$
is infinitesimal.

\noindent To prove that \ref{enu:downdir3} entails \ref{enu:downdir1},
assume that $a\in A\in\mathcal{I}_{1}$. Set $B:=I_{2}$ and take
any $b\in B$. By \ref{enu:downdir3} we obtain that $f:\left(I_{1}\right)_{\le a}\ra\left(I_{2}\right)_{\le b}$
is infinitesimal. Therefore, for each $\alpha\le a$ there exists
$\widetilde{\beta}\leq b$ such that we have $f(\eps_{2})\le\alpha$
for every $\eps_{2}\leq\widetilde{\beta}$. small, let's say for $\eps_{2}\le\tilde{\beta}\le b$.
But $\mathbb{I}_{1}$ is segmented, so there exists $a'$ such that
$(\emptyset,a']\subseteq A$. Once again from \ref{enu:downdir3}
we also have that $f:\left(I_{1}\right)_{\le a'}\ra\left(I_{2}\right)_{\le b}$
is infinitesimal. Hence for some $\bar{\beta}\le b$ we have $f(\eps_{2})\le a'$
for each $\eps_{2}\le\bar{\beta}$. Since $(I_{2},\le)$ is downward
directed, we can find $\beta\in I_{2}=B$ such that $\beta\le\tilde{\beta}$
and $\beta\le\bar{\beta}$. Therefore, for each $\eps_{2}\le\beta$
we have both $f(\eps_{2})\le\alpha$ and $f(\eps_{2})\in(\emptyset,a']\subseteq A$.
This proves that $f:A_{\le a}\ra B_{\le b}$ is infinitesimal, which
completes the proof.\end{proof}
\begin{thm}
\label{thm:For-every-downward} Every segmented downward directed
set of indices $\mathbb{I}$ is isomorphic to $\bar{\mathbb{I}}$
in the category \textbf{\emph{Ind}}.\end{thm}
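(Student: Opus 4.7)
The plan is to show that the identity map $\mathrm{id}_I : I \to I$ serves as the underlying function of an isomorphism between $\mathbb{I}$ and $\bar{\mathbb{I}}$ in $\mathbf{Ind}$. Since a morphism $\mathbb{I}_1 \to \mathbb{I}_2$ in this category is encoded by a map $I_2 \to I_1$, and the two sets of indices $\mathbb{I}$ and $\bar{\mathbb{I}}$ share the same underlying preordered set $(I, \le)$, the same function $\mathrm{id}_I$ will furnish candidate arrows in both directions.

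First I would verify that $\bar{\mathbb{I}} = (I, \le, \mathcal{S}_I)$ is itself both segmented and downward directed. Downward directedness is inherited verbatim from $\mathbb{I}$, since the preorder is unchanged. For segmentation, each element of $\mathcal{S}_I$ is either some $(\emptyset, a]$, which trivially contains itself, or the whole set $I$, which contains $(\emptyset, a]$ for any $a \in I$ (the existence of such an $a$ following from the nonemptiness of $I$ built into Def.~\ref{def:setOfIndices}). This ensures that the hypotheses of Theorem~\ref{thm:downdir} are met no matter which of $\mathbb{I}$ or $\bar{\mathbb{I}}$ plays the role of $\mathbb{I}_1$ or $\mathbb{I}_2$.

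The main step is then to apply Theorem~\ref{thm:downdir} twice with $f = \mathrm{id}_I$: once to check that $\mathrm{id}_I : \mathbb{I} \to \bar{\mathbb{I}}$ is a morphism, and once with the roles swapped. In both cases condition \ref{enu:downdir2} of that theorem reduces to the statement that for every $a \in I$ there exists $b \in I$ with $\mathrm{id}_I((\emptyset, b]) \subseteq (\emptyset, a]$, which is settled by $b = a$. Composing these two arrows in either order yields $\mathrm{id}_I$, which is the identity morphism on the respective set of indices (as observed in Example~\ref{exa:morphSetsOfIndices}), so we obtain an isomorphism in $\mathbf{Ind}$. I do not foresee any serious obstacle: the only point to keep straight is the contravariant bookkeeping for morphisms in $\mathbf{Ind}$, which is invisible once one observes that both sets of indices share the same underlying set.
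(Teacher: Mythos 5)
Your proposal is correct and follows essentially the same route as the paper: the identity map on $I$, checked to be a morphism in both directions via condition \ref{enu:downdir2} of Theorem~\ref{thm:downdir}. The only difference is that you spell out the verification that $\bar{\mathbb{I}}$ is itself segmented and downward directed so that Theorem~\ref{thm:downdir} applies with the roles swapped, a point the paper leaves implicit.
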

\begin{proof}
It suffices to consider the identity $1_{I}:i\in I\mapsto i\in I$,
which is a morphism $1_{I}\in\textbf{Ind}(\mathbb{I},\bar{\mathbb{I}})\cap\textbf{Ind}(\bar{\mathbb{I}},\mathbb{I})$
because of condition \ref{enu:downdir2} of Thm \ref{thm:downdir}. 
\end{proof}
\noindent Therefore, up to isomorphism, the only segmented downward
directed set of indices having $(I,\le)$ as underlying pre-ordered
set is $\overline{\mathbb{I}}.$

\section{Asymptotic Gauge Colombeau Type Algebras}

\subsection{Asymptotic Gauges}

In \cite{GiLu15}, we introduced the notion of asymptotic gauge. The
idea was to use it as an asymptotic scale that generalizes the role
of the polynomial family $(\eps^{n})_{\eps\in(0,1],n\in\N}$ in classical
constructions of Colombeau algebras. We recall the notations and notions
from \cite{GiLu15} that we will use in the present work. For all
the proofs, we refer to \cite{GiLu15}.
\begin{defn}
\label{def:asymptoticGauge}Let $\mathbb{I}=(I,\le,\mathcal{I})$
be a set of indices. All big-Os in this definition have to be meant
as $O_{\mathcal{I}}$ in $\mathbb{I}$ (see Def. \ref{def:2ndBigOh}).
We say that $\mathcal{B}$ is an \emph{asymptotic gauge on $\mathbb{I}$}
(briefly: AG on $\mathbb{I}$) if
\begin{enumerate}[leftmargin=*,label=(\roman*),align=left ]
\item \label{enu:AGnets}$\mathcal{B}\subseteq\R^{I}$;
\item \label{enu:AGinfinite}$\exists i\in\mathcal{B}:\ \lim_{\mathbb{I}}i=\infty$;
\item \label{enu:AGProduct}$\forall i,j\in\mathcal{B}\,\exists p\in\mathcal{B}:\ i\cdot j=O(p)$;
\item \label{enu:AGproductScalar}$\forall i\in\mathcal{B}\,\forall r\in\R\,\exists\sigma\in\mathcal{B}:\ r\cdot i=O(\sigma)$;
\item \label{enu:AGabsSum}$\forall i,j\in\B\,\exists s\in\B:\ s>_{\mathbb{I}}0\ ,\ |i|+|j|=O(s)$.
\end{enumerate}
Let $\B$ be an AG on the set of indices $\mathbb{I}=(I,\le,\mathcal{I})$.
The set of \emph{moderate nets generated by} \emph{$\B$} is 

\[
\R_{M}(\B):=\left\{ x\in\R^{I}\mid\exists b\in\B:\ x_{\eps}=O(b_{\eps})\right\} .
\]

\end{defn}
Let us observe that $\R_{M}(\B)$ is an AG, and that $\R_{M}(\R_{M}(\B))=\R_{M}(\B).$
Every asymptotic gauge formalizes a notion of ``growth condition''.
We can hence use an asymptotic gauge $\B$ to define moderate nets.
We can also use the reciprocals of nets taken from another asymptotic
gauge $\mathcal{Z}$ to define negligible nets. From this point of
view, it is natural to introduce the following:
\begin{defn}
Let $\Omega\subseteq\mathbb{R}^{n}$ be an open set, let $\mathcal{B}$,
$\mathcal{Z}$ be AG on the same set of indices $\mathbb{I}=(I,\leq,\mathcal{I})$.
The \emph{set of $\B$-moderate nets }is
\[
\mathcal{E}_{M}(\B,\Omega):=\{u\in\mathcal{C}^{\infty}(\Omega)^{I}\mid\forall K\Subset\Omega\,\forall\alpha\in\mathbb{N}^{n}\exists b\in\B:\ \sup\limits _{x\in K}|\partial^{\alpha}u_{\eps}(x)|=O(b_{\eps})\}.
\]

\noindent The \emph{set of $\mathcal{Z}$-negligible nets} is
\begin{equation}
\mathcal{N}(\mathcal{Z},\Omega):=\{u\in\mathcal{C}^{\infty}(\Omega)^{I}\mid\forall K\Subset\Omega\,\forall\alpha\in\mathbb{N}^{n}\forall z\in\mathcal{Z}_{>0}:\ \sup\limits _{x\in K}|\partial^{\alpha}u_{\eps}(x)|=O(z_{\eps}^{-1})\}.\label{eq:DefNegligible}
\end{equation}

\end{defn}
In \cite{GiLu15}, we proved that if $\R_{M}(\B)\subseteq\R_{M}(\mathcal{Z})$
then the quotient $\mathcal{E}_{M}(\B,\Omega)/\mathord{\mathcal{N}(\mathcal{Z},\Omega)}$
is an algebra. When this happens, we will use the following:
\begin{defn}
\label{def:Colombeau-AG-Algebra}Let $\B,\mathcal{Z}$ be AG on the
same set of indices $\mathbb{I}=(I,\leq,\mathcal{I})$ such that $\R_{M}(\B)\subseteq\R_{M}(\mathcal{Z})$.
The \emph{Colombeau AG algebra generated by $\B$ and $\mathcal{Z}$}
is the quotient 
\[
\mathcal{G}(\B,\mathcal{Z}):=\mathcal{E}_{M}(\B,\Omega)/\mathord{\mathcal{N}(\mathcal{Z},\Omega)}.
\]

\noindent We will use the notation $\GBZ$ to emphasize the dependence
on the open set $\Omega$.
\end{defn}
Morphisms between sets of indices can be used to construct asymptotic
gauges, as the following theorem shows.
\begin{thm}
Let $\mathcal{B}$ be an asymptotic gauge on the set of indices $\mathbb{I}_{1}$
and let $f:\mathbb{I}_{1}\longrightarrow\mathbb{I}_{2}$ be a morphism.
Then
\[
\mathcal{B}\circ f=\{b\circ f\mid b\in\mathcal{B}\}
\]
is an asymptotic gauge on $\mathbb{I}_{2}$.\end{thm}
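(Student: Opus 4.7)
The plan is to verify, one by one, the five defining conditions of an asymptotic gauge (Def.~\ref{def:asymptoticGauge}) for $\mathcal{B}\circ f$ viewed as a subset of $\R^{I_2}$. The engine for each verification is Cor.~\ref{cor:morfind}, which is precisely the statement that the three asymptotic notions appearing in the axioms—the limit $\lim_{\mathbb{I}}$, the big-O $O(\cdot)$, and the strict order $>_{\mathbb{I}}$—are preserved under right-composition with the morphism $f$.

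First, condition \ref{enu:AGnets} is immediate: since $f\colon I_2\to I_1$ and each $b\in\mathcal{B}$ is a map $I_1\to\R$, every $b\circ f$ lies in $\R^{I_2}$. For condition \ref{enu:AGinfinite}, pick $i\in\mathcal{B}$ with $\lim_{\mathbb{I}_1}i=\infty$ and apply Cor.~\ref{cor:morfind}\ref{enu:morphismsPreserveLim} to obtain $\lim_{\mathbb{I}_2}(i\circ f)=\infty$, so $i\circ f\in\mathcal{B}\circ f$ is the required infinite element.

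For conditions \ref{enu:AGProduct}, \ref{enu:AGproductScalar}, \ref{enu:AGabsSum}, given $i,j\in\mathcal{B}$ and $r\in\R$, I will use the existential witnesses $p,\sigma,s\in\mathcal{B}$ provided by the corresponding axioms applied to $\mathcal{B}$ on $\mathbb{I}_1$. Composition with $f$ commutes pointwise with products, scalar multiplication, and $|\cdot|$, so $(i\cdot j)\circ f=(i\circ f)\cdot(j\circ f)$, $(r\cdot i)\circ f=r\cdot(i\circ f)$, and $(|i|+|j|)\circ f=|i\circ f|+|j\circ f|$. Applying Cor.~\ref{cor:morfind}\ref{enu:morphismsPreserveBigO} transports each big-O witness, so that $p\circ f$, $\sigma\circ f$, $s\circ f\in\mathcal{B}\circ f$ serve as the required elements on $\mathbb{I}_2$; Cor.~\ref{cor:morfind}\ref{enu:morphismsPreserveOrder} in addition transports $s>_{\mathbb{I}_1}0$ to $s\circ f>_{\mathbb{I}_2}0$, completing condition \ref{enu:AGabsSum}.

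There is no genuine obstacle, since the notion of morphism in $\mathbf{Ind}$ was designed precisely so that all relevant asymptotic structure transports correctly, and Cor.~\ref{cor:morfind} has already packaged the three preservation statements we need. The only minor point to be careful about is the trivial commutation of composition with the pointwise algebraic operations appearing in the axioms; once this is noted, the proof reduces to five routine applications of Cor.~\ref{cor:morfind}.
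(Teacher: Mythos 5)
Your proof is correct and follows essentially the same route as the paper: both verify the five axioms of Def.~\ref{def:asymptoticGauge} for $\mathcal{B}\circ f$ by transporting the witnesses from $\mathbb{I}_1$ via the preservation statements of Cor.~\ref{cor:morfind} (the paper writes out only the $|i|+|j|=O(s)$ case and leaves the rest as analogous, whereas you spell out all five). No gaps.
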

\begin{proof}
All the defining properties of an asymptotic gauge for $\mathcal{B}\circ f$
can be derived from Cor.~\ref{cor:morfind}. For example, let us
prove that $\forall i,j\in\B\circ f\,\exists s\in\B\circ f:\ s>_{\mathbb{I}}0\ ,\ |i|+|j|=O(s).$
Let $i$, $j\in\B\circ f$ and let $i=b_{1}\circ f,\, j=b_{2}\circ f$.
Let $b_{3}\in\B$ be such that $|b_{1}|+|b_{2}|=O(b_{3}).$ Then by
Cor.~\ref{cor:morfind} we deduce that $|b_{1}\circ f|+|b_{2}\circ f|=O(b_{3}\circ f).$
Setting $s=b_{3}\circ f$ we therefore have that $|i|+|j|=O(s).$
\end{proof}

\section{\label{sec:The-categories-Ag21}The categories Ag$_{2}$ and Ag$_{1}$}

We want to prove that the Colombeau AG algebra construction of Def.~\ref{def:Colombeau-AG-Algebra}
is functorial in the pair $(\B,\mathcal{Z})$ of AG. In proving this
result, the following category arises naturally:
\begin{defn}
\label{def:AG2}We set
\begin{enumerate}[leftmargin=*,label=(\roman*),align=left ]
\item $(\mathcal{B},\mathcal{Z})\in\textsc{Ag}_{2}$ if $\mathcal{B}$,
$\mathcal{Z}$ are AG on some set of indices $\mathbb{I}$ and $\R_{M}(\B)\subseteq\R_{M}(\mathcal{Z})$.
\item \label{enu:defAG2-morph}Let $(\mathcal{B}_{1},\mathcal{Z}_{1})$,
$(\mathcal{B}_{2},\mathcal{Z}_{2})\in\textsc{Ag}_{2}$ be pairs of
AG on the sets of indices resp.~$\mathbb{I}_{1}$, $\mathbb{I}_{2}$.
We say that $f\in\textsc{Ag}_{2}\left((\B_{1},\mathcal{Z}_{1}),(\B_{2},\mathcal{Z}_{2})\right)$
is a \emph{morphism of pairs of AG} if $f\in\textbf{Ind}(\mathbb{I}_{1},\mathbb{I}_{2})$,
$\mathbb{R}_{M}(\mathcal{B}_{1}\circ f)\subseteq\mathbb{R}_{M}(\mathcal{B}_{2})$
and $\R_{M}(\mathcal{Z}_{2})\subseteq\R_{M}(\mathcal{Z}_{1}\circ f)$.
\end{enumerate}
\end{defn}
\begin{thm}
$\textsc{Ag}_{2}$ with set-theoretical composition and identity is
a category.\end{thm}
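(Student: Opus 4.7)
The plan is to reduce the category axioms for $\textsc{Ag}_{2}$ to the corresponding facts for $\textbf{Ind}$ (Thm.~\ref{thm:IndCategory}), plus two small computations. Because $\textsc{Ag}_{2}$ uses set-theoretical composition and the identities of $\textbf{Ind}$ on the underlying arrows, associativity and both unit laws hold automatically once we know that identities and composites live in $\textsc{Ag}_{2}$. Hence the whole proof reduces to checking two things: (a) for every object $(\mathcal{B},\mathcal{Z})$, the identity arrow $1_{\mathbb{I}}$ satisfies the two growth conditions of Def.~\ref{def:AG2}\ref{enu:defAG2-morph}; and (b) if $f\in\textsc{Ag}_{2}((\mathcal{B}_{1},\mathcal{Z}_{1}),(\mathcal{B}_{2},\mathcal{Z}_{2}))$ and $g\in\textsc{Ag}_{2}((\mathcal{B}_{2},\mathcal{Z}_{2}),(\mathcal{B}_{3},\mathcal{Z}_{3}))$, then $f\circ g$ lies in $\textsc{Ag}_{2}((\mathcal{B}_{1},\mathcal{Z}_{1}),(\mathcal{B}_{3},\mathcal{Z}_{3}))$.

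Point (a) is immediate because $\mathcal{B}\circ 1_{\mathbb{I}}=\mathcal{B}$ and $\mathcal{Z}\circ 1_{\mathbb{I}}=\mathcal{Z}$, so the two growth inclusions become equalities. Point (b) is the heart of the proof and will be obtained by a double application of Cor.~\ref{cor:morfind}\ref{enu:morphismsPreserveBigO}, which states that morphisms of $\textbf{Ind}$ preserve the big-O relation. For the $\mathcal{B}$-inclusion, I would start with any $b_{1}\in\mathcal{B}_{1}$: the hypothesis on $f$ provides $b_{2}\in\mathcal{B}_{2}$ with $b_{1}\circ f=O(b_{2})$ in $\mathbb{I}_{2}$; composing with $g$ and applying Cor.~\ref{cor:morfind}\ref{enu:morphismsPreserveBigO} yields $b_{1}\circ(f\circ g)=(b_{1}\circ f)\circ g=O(b_{2}\circ g)$ in $\mathbb{I}_{3}$; the hypothesis on $g$ then supplies $b_{3}\in\mathcal{B}_{3}$ with $b_{2}\circ g=O(b_{3})$, and transitivity of big-O delivers $b_{1}\circ(f\circ g)=O(b_{3})$. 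Combined with the idempotence $\R_{M}(\R_{M}(\mathcal{B}_{3}))=\R_{M}(\mathcal{B}_{3})$, this gives $\R_{M}(\mathcal{B}_{1}\circ(f\circ g))\subseteq\R_{M}(\mathcal{B}_{3})$.

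The $\mathcal{Z}$-inclusion is then obtained by reading the same chain backwards. Given $z\in\R_{M}(\mathcal{Z}_{3})$, the hypothesis for $g$ gives $z_{2}\in\mathcal{Z}_{2}$ with $z=O(z_{2}\circ g)$; the hypothesis for $f$ supplies $z_{1}\in\mathcal{Z}_{1}$ with $z_{2}=O(z_{1}\circ f)$ in $\mathbb{I}_{2}$; Cor.~\ref{cor:morfind}\ref{enu:morphismsPreserveBigO} transports this to $z_{2}\circ g=O((z_{1}\circ f)\circ g)$ in $\mathbb{I}_{3}$, and transitivity yields $z=O(z_{1}\circ(f\circ g))$, hence $z\in\R_{M}(\mathcal{Z}_{1}\circ(f\circ g))$. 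This establishes $\R_{M}(\mathcal{Z}_{3})\subseteq\R_{M}(\mathcal{Z}_{1}\circ(f\circ g))$, completing (b).

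The only genuinely delicate point is the bookkeeping: morphisms of $\textbf{Ind}$ are contravariant on underlying sets (the arrow $f\colon\mathbb{I}_{1}\to\mathbb{I}_{2}$ is a set map $I_{2}\to I_{1}$), and the two growth conditions have opposite variance (forward inclusion for $\mathcal{B}$, backward for $\mathcal{Z}$), so one has to be careful about which hypothesis is invoked on which side. Aside from this, the proof is routine and uses no structural property of AGs beyond their membership in $\textbf{Ind}$ and the idempotence $\R_{M}\circ\R_{M}=\R_{M}$, so no further obstruction is anticipated.
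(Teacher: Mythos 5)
Your proof is correct and follows essentially the same route as the paper: reduce everything to checking that identities and composites satisfy the two growth inclusions of Def.~\ref{def:AG2}, then verify the composite case by transporting the big-O relations along $g$ (Cor.~\ref{cor:morfind}\ref{enu:morphismsPreserveBigO}) and invoking the idempotence $\R_{M}(\R_{M}(\blank))=\R_{M}(\blank)$. You are in fact somewhat more explicit than the paper, which leaves the appeal to Cor.~\ref{cor:morfind} implicit in the step $\R_{M}(\mathcal{Z}_{2})\subseteq\R_{M}(\mathcal{Z}_{1}\circ f)\Rightarrow\R_{M}(\mathcal{Z}_{2}\circ g)\subseteq\R_{M}((\mathcal{Z}_{1}\circ f)\circ g)$ and omits the (trivial) identity check.
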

\begin{proof}
It is sufficient to consider the composition. Let $f\in\textsc{Ag}_{2}\left((\B_{1},\mathcal{Z}_{1}),(\B_{2},\mathcal{Z}_{2})\right)$
and $g\in\textsc{Ag}_{2}\left((\B_{2},\mathcal{Z}_{2}),(\B_{3},\mathcal{Z}_{3})\right)$.
By definition, $f\in\textbf{Ind}(\mathbb{I}_{1},\mathbb{I}_{2})$
and $g\in\textbf{Ind}(\mathbb{I}_{2},\mathbb{I}_{3})$, therefore
$f\circ g\in\textbf{Ind}(\mathbb{I}_{1},\mathbb{I}_{3})$ by Thm.~\ref{thm:IndCategory}.
Moreover, 
\[
\mathbb{R}_{M}(\mathcal{B}_{1}\circ(f\circ g))=\{b\circ f\circ g\mid b\in\mathcal{B}_{1}\}\subseteq\{b\circ g\mid b\in\mathcal{B}_{2}\}=\mathcal{B}_{2}\circ g,
\]
since $\mathbb{R}_{M}(\mathcal{B}_{1}\circ f)\subseteq\mathbb{R}_{M}(\mathcal{B}_{2})$.
But $\mathcal{B}_{2}\circ g\subseteq\mathbb{R}_{M}(\mathcal{B}_{2}\circ g)\subseteq\R_{M}(\mathcal{B}_{3})$,
from which the first part of the conclusion follows. To prove the
second part of the conclusion we notice that, as $\mathbb{R}_{M}(\mathcal{Z}_{2})\subseteq\mathbb{R}_{M}(\mathcal{Z}_{1}\circ f)$,
we have that 
\[
\mathbb{R}_{M}(\mathcal{Z}_{2}\circ g)\subseteq\mathbb{R}_{M}((\mathcal{Z}_{1}\circ f)\circ g)=\mathbb{R}_{M}(\mathcal{Z}_{1}\circ(f\circ g)),
\]
and the thesis follows since $\mathbb{R}_{M}(\mathcal{Z}_{3})\subseteq\mathbb{R}_{M}(\mathcal{Z}_{2}\circ g)$
by hypothesis.
\end{proof}
The generalization with two AG is a relatively new step in considering
Colombeau like algebras. It is therefore natural to consider also
the following
\begin{defn}
\label{def:AG1}We say that $\B\in\textsc{Ag}_{1}$ if $(\B,\B)\in\textsc{Ag}_{2}$.
We set $f\in\textsc{Ag}_{1}(\B_{1},\B_{2})$ if $f\in\textsc{Ag}_{2}((\B_{1},\B_{1}),(\B_{2},\B_{2}))$.
We call such an $f$ a \emph{morphism of AG}. In this case, Def.~\ref{def:AG2}~\ref{enu:defAG2-morph}
becomes $\R_{M}(\B_{1}\circ f)=\R_{M}(\B_{2})$. 
\end{defn}
Of course $\textsc{Ag}_{1}$ is embedded into $\textsc{Ag}_{2}$ by
means of $\B\mapsto(\B,\B)$ and of the identity on arrows. By an
innocuous abuse of language, we can hence say that $\textsc{Ag}_{1}$
is a subcategory of $\textsc{Ag}_{2}$.
\begin{example}
~
\begin{enumerate}[leftmargin=*,label=(\roman*),align=left ]
\item  Let $\mathbb{I}=\mathbb{I}^{\srm}$, let $\mathcal{B}_{1}=\{\left(\varepsilon^{-n}\right)\mid n\in\mathbb{N}\}$,
$\mathcal{B}_{2}=\{\left(\varepsilon^{-2n}\right)\mid n\in\mathbb{N}\}$.
Then $f$, $g:\mathbb{I\longrightarrow\mathbb{I}}$ such that $f(\varepsilon)=\varepsilon^{2}$
and $g(\varepsilon)=\sqrt{\varepsilon}$ induce morphisms $\mathcal{B}_{1}\overset{f}{\longrightarrow}\mathcal{B}_{2}$
and $\mathcal{B}_{2}\overset{g}{\longrightarrow}\mathcal{B}_{1}$.
Clearly $f\circ g=1_{\mathcal{B}_{2}}$ and $g\circ f=1_{\mathcal{B}_{1}}$,
therefore $\mathcal{B}_{1}$ and $\mathcal{B}_{2}$ are isomorphic.
\item Let $\mathbb{I}_{1}=\mathbb{I}^{\srm},$ $\mathbb{I}_{2}=\overline{\mathbb{N}}$
(see Example~\ref{exa:morphSetsOfIndices} \ref{enu:Nbar}), let
$\mathcal{B}_{1}=\{(\varepsilon^{-n})\mid n\in\mathbb{N}\}$, $\mathcal{B}_{2}=\{(n^{m})_{n}\mid m\in\mathbb{N}\}$.
Then $f:\mathbb{I}_{1}\longrightarrow\mathbb{I}_{2}$ such that $f(n)=\frac{1}{n+1}$
for every $n\in\mathbb{N}$ induces a morphism $\mathcal{B}_{1}\overset{f}{\longrightarrow}\mathcal{B}_{2}$
and $g:\mathbb{I}_{2}\longrightarrow\mathbb{I}_{1}$ such that $g(\varepsilon)=\lfloor\frac{1}{\eps}\rfloor$
for every $\varepsilon\in(0,1]$ induces a morphism $\mathcal{B}_{2}\overset{g}{\longrightarrow}\mathcal{B}_{1}$.
\item Let $o:\overline{\mathbb{N}}\longrightarrow\widetilde{\mathbb{I}}^{\erm}$
be the morphism given by the maps $o$ that maps every $\varphi\in\mathcal{A}_{0}$
to the order $o(\varphi)$ of $\varphi$. Let $\mathcal{B}_{1}=\{(b_{n}^{m})_{n}\mid m\in\mathbb{N}\}$
where $b_{n}=\frac{1}{n+1}$ for every $n\in\mathbb{N}$ and let $\mathcal{B}_{2}=\{(b_{\varphi}^{n})_{\phi}\mid n\in\mathbb{N}\}$,
where $b_{\varphi}=\frac{1}{o(\varphi)+1}$ for every $\varphi\in\mathcal{A}_{0}$.
Then $\mathcal{B}_{1}\overset{o}{\longrightarrow}\mathcal{B}_{2}$
is a morphism. 
\item Set $f(\eps)=\eps+\eps^{2}\cdot\sin\left(\frac{1}{\eps}\right)$ for
$\eps\in(0,1]$ and $\B^{\srm}:=\left\{ \eps^{-a}\mid a\in\R_{>0}\right\} $.
Then $\eps-\eps^{2}\le f(\eps)\le\eps+\eps^{2}$, and this implies
$f\in\textsc{Ag}_{1}(\B^{\srm},\B^{\srm})$. Let us note that $f$
is not invertible in any neighbourhood of $0$ so that it is not an
isomorphism of AG.
\end{enumerate}
\end{example}
In \cite{GiLu15}, we defined two asymptotic gauges $\mathcal{B}_{1}$,
$\mathcal{B}_{2}$ to be equivalent if and only if $\R_{M}(\B_{1})=\R_{M}(\B_{2}).$
Within the present categorical framework, this definition is motivated
by the following result.
\begin{thm}
Let $\mathcal{B}$ be an asymptotic gauge on $\mathbb{I}$. Then $\mathcal{B}$
is isomorphic to $\mathbb{R}_{M}(\mathcal{B})$.\end{thm}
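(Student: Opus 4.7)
The plan is to exhibit an explicit isomorphism using the identity morphism $1_{\mathbb{I}}$ of the underlying set of indices. Observe that, as noted immediately after Def.~\ref{def:asymptoticGauge}, $\mathbb{R}_M(\mathcal{B})$ is itself an AG on $\mathbb{I}$ and the key idempotence identity $\mathbb{R}_M(\mathbb{R}_M(\mathcal{B})) = \mathbb{R}_M(\mathcal{B})$ holds. Since both $\mathcal{B}$ and $\mathbb{R}_M(\mathcal{B})$ live on the same set of indices $\mathbb{I}$, the natural candidate for a morphism in either direction is $1_{\mathbb{I}}$, which by Example~\ref{exa:morphSetsOfIndices}(i) already lies in $\textbf{Ind}(\mathbb{I},\mathbb{I})$.

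First I would verify that $1_{\mathbb{I}} \in \textsc{Ag}_1(\mathcal{B}, \mathbb{R}_M(\mathcal{B}))$. By Def.~\ref{def:AG1}, this reduces to checking the equality $\mathbb{R}_M(\mathcal{B} \circ 1_{\mathbb{I}}) = \mathbb{R}_M(\mathbb{R}_M(\mathcal{B}))$. The left-hand side is simply $\mathbb{R}_M(\mathcal{B})$, and by idempotence the right-hand side is also $\mathbb{R}_M(\mathcal{B})$, so the equality holds. Next I would verify in exactly the same way that $1_{\mathbb{I}} \in \textsc{Ag}_1(\mathbb{R}_M(\mathcal{B}), \mathcal{B})$: here one needs $\mathbb{R}_M(\mathbb{R}_M(\mathcal{B}) \circ 1_{\mathbb{I}}) = \mathbb{R}_M(\mathcal{B})$, and the left-hand side collapses to $\mathbb{R}_M(\mathbb{R}_M(\mathcal{B})) = \mathbb{R}_M(\mathcal{B})$ once more by idempotence.

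Finally, since set-theoretical composition of identities is the identity, the two morphisms just constructed are mutually inverse in $\textsc{Ag}_1$, which gives the desired isomorphism. The only step with any content is the use of $\mathbb{R}_M(\mathbb{R}_M(\mathcal{B})) = \mathbb{R}_M(\mathcal{B})$, and this is precisely what makes the definition of morphism in $\textsc{Ag}_1$ (which compares gauges via their $\mathbb{R}_M$-closures rather than pointwise) interact cleanly with passage to the closure; there is no real obstacle beyond unwinding definitions.
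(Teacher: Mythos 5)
Your proof is correct and follows exactly the paper's own argument: both directions are given by the identity $1_{I}$, and the only substantive ingredient is the idempotence $\mathbb{R}_{M}(\mathbb{R}_{M}(\mathcal{B}))=\mathbb{R}_{M}(\mathcal{B})$ noted after the definition of asymptotic gauge. Your write-up is in fact slightly more explicit than the paper's, since you also spell out that the two identity morphisms compose to identities.
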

\begin{proof}
It is sufficient to observe that, by definition, $\mathcal{B}\overset{1_{I}}{\longrightarrow}\mathbb{R}_{M}(\mathcal{B})$
is a morphism and, since $\mathbb{R}_{M}(\mathbb{R}_{M}(\mathcal{B}))=\mathbb{\mathbb{R}}_{M}(\mathcal{B})$,
also $\mathbb{R}_{M}(\mathcal{B})\overset{1_{I}}{\longrightarrow}\mathcal{B}$
is a morphism. 
\end{proof}
In particular, it follows that for every two asymptotic gauges $\mathcal{B}_{1}$,
$\mathcal{B}_{2}$ defined on the same set of indices $\mathbb{I}$,
we have that if $\mathcal{B}_{1}$ is equivalent to $\mathcal{B}_{2}$
then they are isomorphic. Conversely, if $f\in\textsc{Ag}_{1}(\mathcal{B}_{1},\mathcal{B}_{2})$
is an isomorphism, then $\R_{M}(\mathcal{B}_{1})=\R_{M}(\mathcal{B}_{1}\circ f\circ f^{-1})=\R_{M}(\mathcal{B}_{2}\circ f^{-1})=\R_{M}(\mathcal{B}_{1})$,
and hence $\R_{M}(\mathcal{B}_{1})=\R_{M}(\mathcal{B}_{2}\circ f^{-1})$.
Analogously, $\R_{M}(\mathcal{B}_{2})=\R_{M}(\mathcal{B}_{1}\circ f)$.
In particular, the identity $1_{I}\in\textsc{Ag}_{1}(\mathcal{B}_{1},\mathcal{B}_{2})\cap\textsc{Ag}_{1}(\mathcal{B}_{2},\mathcal{B}_{1})$
if and only if these AG are equivalent. For example $\left\{ \left(\eps^{-a}\right)\mid a\in\R_{>0}\right\} $
and $\left\{ \left(\eps^{-n}\right)\mid n\in\N\right\} $ are equivalent.
Nevertheless, it is not difficult to prove that not all isomorphic
AG on the same set of indices are equivalent. To prove this result,
we need to recall (see \cite[Def.~36]{GiLu15}) that an AG $\B$ is
called \emph{principal} if there exists a \emph{generator} $b\in\B$
such that $\R_{M}(\text{AG}(b))=\R_{M}(\B)$, where $\text{AG}(b):=\left\{ b^{m}\mid m\in\N\right\} $.
\begin{thm}
\label{thm:Ordering of AG}For every principal AG $\mathcal{B}_{1}$,
$\mathcal{B}_{2}$ on $\mathbb{I}^{\srm}$, if $\mathbb{R}_{M}(\mathcal{B}_{1})\subsetneq\mathbb{R}_{M}(\mathcal{B}_{2})$
then there exists a principal AG $\mathcal{B}_{3}$ such that $\mathbb{R}_{M}(\mathcal{B}_{1})\subsetneq\mathbb{R}_{M}(\mathcal{B}_{3})\subsetneq\mathbb{R}_{M}(\mathcal{B}_{2}).$\end{thm}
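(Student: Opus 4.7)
The plan is to produce an explicit net $b_{3}\in\R^{(0,1]}$ whose growth lies strictly between those of generators of $\mathcal{B}_{1}$ and $\mathcal{B}_{2}$, and to set $\mathcal{B}_{3}:=\{b_{3}^{m}\mid m\in\mathbb{N}\}$.

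\textbf{Setup and key non-domination.} Choose generators $b_{1}\in\mathcal{B}_{1}$ and $b_{2}\in\mathcal{B}_{2}$; after replacing each $\mathcal{B}_{i}$ by the $\R_{M}$-equivalent AG generated by $|b_{i}|+i_{i}+2$, where $i_{i}\in\mathcal{B}_{i}$ is an infinite element granted by the AG axioms, I may assume $b_{1},b_{2}\geq 2$ pointwise and $b_{1},b_{2}\to+\infty$ as $\eps\to 0^{+}$. From $\R_{M}(\mathcal{B}_{1})\subseteq\R_{M}(\mathcal{B}_{2})$ I get a $k_{0}\geq 1$ with $b_{1}=O(b_{2}^{k_{0}})$. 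Picking $x\in\R_{M}(\mathcal{B}_{2})\setminus\R_{M}(\mathcal{B}_{1})$ and writing $x=O(b_{2}^{m_{0}})$, any bound $b_{2}=O(b_{1}^{j})$ would force $x\in\R_{M}(\mathcal{B}_{1})$, a contradiction. Hence
\[
b_{2}\neq O(b_{1}^{k})\quad\text{for every }k\in\mathbb{N}.
\]

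\textbf{Construction.} Using this non-domination with exponent $k=2^{n}$, I inductively pick $\eps_{n}<\eps_{n-1}$ (setting $\eps_{0}:=1$) with $b_{2}(\eps_{n})>b_{1}(\eps_{n})^{2^{n}}$ for every $n\geq 1$. Define
\[
b_{3}(\eps):=\begin{cases} b_{1}(\eps)^{n} & \text{if }\eps=\eps_{n}\text{ for some }n\geq 1,\\ b_{1}(\eps) & \text{otherwise.}\end{cases}
\]
Because $b_{3}\geq b_{1}\geq 2$ everywhere and $b_{3}\to+\infty$, the set $\mathcal{B}_{3}:=\{b_{3}^{m}\mid m\in\mathbb{N}\}$ satisfies the AG axioms routinely and is principal with generator $b_{3}$.

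\textbf{Inclusions.} From $b_{3}\geq b_{1}$ pointwise, $b_{1}^{k}\leq b_{3}^{k}$, giving $\R_{M}(\mathcal{B}_{1})\subseteq\R_{M}(\mathcal{B}_{3})$. Off $\{\eps_{n}\}$, $b_{3}=b_{1}\leq Cb_{2}^{k_{0}}$; at $\eps_{n}$, using $n\leq 2^{n}$ and $b_{2}(\eps_{n})\geq 1$,
\[
b_{3}(\eps_{n})=b_{1}(\eps_{n})^{n}\leq b_{1}(\eps_{n})^{2^{n}}\leq b_{2}(\eps_{n})\leq b_{2}(\eps_{n})^{k_{0}}.
\]
Hence $b_{3}=O(b_{2}^{k_{0}})$ globally and $\R_{M}(\mathcal{B}_{3})\subseteq\R_{M}(\mathcal{B}_{2})$. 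Strictness of the lower inclusion is immediate: $b_{3}(\eps_{n})/b_{1}(\eps_{n})^{k}=b_{1}(\eps_{n})^{n-k}\to+\infty$ as $n\to\infty$ for any fixed $k$, so $b_{3}\notin\R_{M}(\mathcal{B}_{1})$.

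\textbf{Main obstacle.} The delicate point is the strict upper inclusion $\R_{M}(\mathcal{B}_{3})\subsetneq\R_{M}(\mathcal{B}_{2})$, i.e.~showing $b_{2}\neq O(b_{3}^{m})$ for every $m$. Along $\eps_{n}$,
\[
\frac{b_{2}(\eps_{n})}{b_{3}(\eps_{n})^{m}}>\frac{b_{1}(\eps_{n})^{2^{n}}}{b_{1}(\eps_{n})^{nm}}=b_{1}(\eps_{n})^{2^{n}-nm}\longrightarrow+\infty,
\]
since $2^{n}-nm\to+\infty$ for any fixed $m$ while $b_{1}(\eps_{n})\to+\infty$. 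The construction hinges on this numerical balance: the \emph{linear} spike height $b_{1}^{n}$ is just large enough to escape $\R_{M}(\mathcal{B}_{1})$, whereas the \emph{exponential} gap $b_{1}^{2^{n}}$ reserved at $\eps_{n}$ by the non-domination leaves slack $b_{1}^{2^{n}-nm}\to+\infty$ to spare, keeping $b_{2}$ outside $\R_{M}(\mathcal{B}_{3})$ for every $m$.
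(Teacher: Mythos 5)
Your construction is essentially sound and close in spirit to the paper's own proof, but there is one genuine gap as written: you only require $\eps_{n}<\eps_{n-1}$, which yields a strictly decreasing sequence but does not force $\eps_{n}\to 0$. If $\inf_{n}\eps_{n}=\eps^{*}>0$, then $b_{3}=b_{1}$ on all of $(0,\eps^{*})$, hence $b_{3}=O(b_{1})$ in the sense of $\mathbb{I}^{\srm}$ (big-O only constrains behaviour on some interval $(0,\eps_{0}]$) and $\R_{M}(\mathcal{B}_{3})=\R_{M}(\mathcal{B}_{1})$; likewise $b_{2}=O(b_{3}^{k_{0}})$ could hold. Both of your strictness arguments conclude from ratios blowing up \emph{along the sequence} $(\eps_{n})$, which refutes a big-O bound only if the $\eps_{n}$ accumulate at $0$. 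The fix is immediate and costs nothing, since $b_{2}\neq O(b_{1}^{2^{n}})$ produces witnesses below any prescribed threshold: add $\eps_{n}<1/n$ to the inductive choice, exactly as the paper does with its condition $\overline{\eps}_{n}<\min\{1/n,\overline{\eps}_{n-1}\}$. A second, purely cosmetic point: $|b_{i}|+i_{i}+2\geq 2$ holds only for $\eps$ small (where $i_{i}\geq 0$), not pointwise; either use $|b_{i}|+|i_{i}|+2$ or note that only small $\eps$ matter.

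Modulo that repair, your argument is correct and the route differs from the paper's in an instructive way. Both proofs modify $b_{1}$ along a sparse decreasing sequence of points witnessing the non-domination $b_{2}\neq O(b_{1}^{k})$ (your clean reduction of strictness to this single statement is also how the paper proceeds). The paper's interpolant alternates between the actual values of $b_{1}$ (at odd-indexed points) and of $b_{2}$ (at even-indexed points), subject to $n\cdot b_{1,\overline{\eps}_{n}}^{n}<b_{2,\overline{\eps}_{n}}$, and obtains each strict inclusion by a minimal-even/odd-$N$ contradiction. Your interpolant instead spikes to the intermediate height $b_{1}^{n}$ at points where the gap is at least $b_{1}^{2^{n}}$, and the elementary estimates $b_{1}^{\,n-k}\to\infty$ and $b_{1}^{\,2^{n}-nm}\to\infty$ handle the two strictness claims symmetrically and, arguably, more transparently; the price is the need to check separately that $b_{3}=O(b_{2}^{k_{0}})$, which the paper gets for free from $b_{3}\leq b_{2}$.
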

\begin{proof}
Let $\mathcal{B}_{1}=AG(b_{1})$ and $\mathcal{B}_{2}=AG(b_{2}).$
Without loss of generality we can assume that $b_{1,\varepsilon},b_{2,\varepsilon}>1$
for every $\varepsilon\in(0,1]$. Moreover, as $\mathbb{R}_{M}(\mathcal{B}_{1})\subsetneq\mathbb{R}_{M}(\mathcal{B}_{2})$,
we have $b_{1}=O_{\mathcal{I}^{\srm}}(b_{2}).$ So, without loss of
generality, we can also assume that $b_{1,\varepsilon}<b_{2,\varepsilon}$
for every $\varepsilon\in(0,1]$. Since $\mathbb{R}_{M}(\mathcal{B}_{1})\subsetneq\mathbb{R}_{M}(\mathcal{B}_{2}),$
we have that $b_{2}\notin\mathbb{R}_{M}(b_{1}),$ namely that 
\[
\forall n\in\mathbb{N}\,\forall\varepsilon\in(0,1]\,\exists\delta<\varepsilon:\ n\cdot b_{1,\delta}^{n}<b_{2,\delta}.
\]
Now, we let $b_{3}\in\mathbb{R}^{(0,1]}$ be a net such that $b_{1,\varepsilon}\leq b_{3,\varepsilon}\leq b_{2,\varepsilon}$
for $\eps$ small, and $\forall n\in\mathbb{N}$ 
\[
b_{3,\overline{\varepsilon}_{n}}=\begin{cases}
b_{1,\overline{\varepsilon}_{n}} & \text{if \ensuremath{n}is odd};\\
b_{2,\overline{\varepsilon}_{n}} & \text{if \ensuremath{n}is even,}
\end{cases}
\]
where $\overline{\varepsilon}_{1}=1$ and 
\[
\overline{\varepsilon}_{n}\in\left\{ \varepsilon<\min\left\{ \frac{1}{n},\overline{\varepsilon}_{n-1}\right\} \mid n\cdot b_{1,\varepsilon}^{n}<b_{2,\varepsilon}\right\} 
\]
for every $n\geq2$. Since $b_{1,\varepsilon}\leq b_{3,\varepsilon}\leq b_{2,\varepsilon}$
for $\eps$ small, we have that that $\mathbb{R}_{M}(\mathcal{B}_{1})\subseteq\mathbb{R}_{M}(AG(b_{3}))\subseteq\mathbb{R}_{M}(\mathcal{B}_{2})$.
Let us prove that the reverse inclusions do not hold. To prove that,
let us assume, by contradiction, that $\mathbb{R}_{M}(AG(b_{3}))\subseteq\mathbb{R}_{M}(\mathcal{B}_{1}).$
In particular, there exists $k\in\mathbb{N}$ such that $b_{3}=O(b_{1}^{k}),$
namely there exists $\varepsilon\in(0,1],\, H\in\mathbb{R}_{>0}$
such that $b_{3,\delta}\leq H\cdot b_{1,\delta}^{k}$ for all $\delta<\varepsilon$.
Set 
\[
N:=\min\left\{ n\in\mathbb{N}\mid n\mbox{ is even and }n\geq\max\left\{ \left\lceil H\right\rceil ,\left\lceil \frac{1}{\varepsilon}\right\rceil ,k\right\} \right\} .
\]
We have 
\[
N\cdot b_{1,\overline{\varepsilon}_{N}}^{N}<b_{2,\overline{\varepsilon}_{N}}=b_{3,\overline{\varepsilon}_{N}}\leq H\cdot b_{1,\overline{\varepsilon}_{N}}^{k}\leq N\cdot b_{1,\overline{\varepsilon}_{N}}^{N},
\]
which is absurd. To prove that $\mathbb{R}_{M}(AG(b_{3}))\subsetneq\mathbb{R}_{M}(\mathcal{B}_{2})$,
we proceed in a similar way. Let us assume, by contradiction, that
$\mathbb{R}_{M}(\mathcal{B}_{2})\subseteq\mathbb{R}_{M}(AG(b_{3}))$.
Let $k\in\mathbb{N}$ be such that $b_{2}=O(b_{3}^{k})$. Let $\varepsilon\in(0,1],\, H\in\mathbb{R}_{>0}$
be such that $b_{2,\delta}\leq H\cdot b_{3,\delta}^{k}$ for all $\delta<\varepsilon$.
Set 
\[
N:=\min\left\{ n\in\mathbb{N}\mid n\mbox{ is odd and }n\geq\max\left\{ \left\lceil H\right\rceil ,\left\lceil \frac{1}{\varepsilon}\right\rceil ,k\right\} \right\} .
\]
We have 
\[
b_{2,\overline{\varepsilon}_{N}}\leq H\cdot b_{3,\overline{\varepsilon}_{N}}^{k}\leq N\cdot b_{3,\overline{\varepsilon}_{N}}^{N}=N\cdot b_{1,\overline{\varepsilon}_{N}}^{N}<b_{2,\overline{\varepsilon}_{N}},
\]
which is absurd.\end{proof}
\begin{cor}
\label{cor:Ordering of AG, part 2}For every principal AG $\mathcal{B}_{1},\mathcal{B}_{2}$
on $\mathbb{I}^{\srm}$, if $\mathbb{R}_{M}(\mathcal{B}_{1})\subsetneq\mathbb{R}_{M}(\mathcal{B}_{2})$
then there exists an infinite sequence $\langle\mathcal{A}_{i}\mid i\in\mathbb{Z}\rangle$
of principal AG on $\mathbb{I}^{\srm}$ such that 
\[
\mathbb{R}_{M}(\mathcal{B}_{1})\subsetneq\dots\subsetneq\mathbb{R}_{M}(\mathcal{A}_{-1})\subsetneq\mathbb{R}_{M}(\mathcal{A}_{0})\subsetneq\mathbb{R}_{M}(\mathcal{A}_{1})\subsetneq\dots\subsetneq\mathbb{R}_{M}(\mathcal{\mathcal{B}}_{2}).
\]
\end{cor}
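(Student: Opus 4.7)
The plan is to obtain the bi-infinite chain of principal AG by iterating Theorem~\ref{thm:Ordering of AG}, which provides a strict intermediate principal AG between any two strictly comparable ones. This density-type statement is the only tool needed, so the whole argument reduces to a bookkeeping induction to the right and to the left of a fixed midpoint.

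First, I would apply Theorem~\ref{thm:Ordering of AG} directly to the pair $(\mathcal{B}_{1},\mathcal{B}_{2})$ to obtain a principal AG $\mathcal{A}_{0}$ on $\mathbb{I}^{\srm}$ such that
\[
\mathbb{R}_{M}(\mathcal{B}_{1})\subsetneq\mathbb{R}_{M}(\mathcal{A}_{0})\subsetneq\mathbb{R}_{M}(\mathcal{B}_{2}).
\]
For positive indices I would then proceed by induction on $n\geq 1$: assuming $\mathcal{A}_{n-1}$ has been constructed with $\mathbb{R}_{M}(\mathcal{A}_{n-1})\subsetneq\mathbb{R}_{M}(\mathcal{B}_{2})$, a further application of Theorem~\ref{thm:Ordering of AG} to the pair $(\mathcal{A}_{n-1},\mathcal{B}_{2})$ yields a principal AG $\mathcal{A}_{n}$ satisfying $\mathbb{R}_{M}(\mathcal{A}_{n-1})\subsetneq\mathbb{R}_{M}(\mathcal{A}_{n})\subsetneq\mathbb{R}_{M}(\mathcal{B}_{2})$, which preserves the inductive hypothesis for the next step.

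Symmetrically, I would construct the negative-index part by induction on $n\geq 1$: assuming $\mathcal{A}_{-n+1}$ has been constructed with $\mathbb{R}_{M}(\mathcal{B}_{1})\subsetneq\mathbb{R}_{M}(\mathcal{A}_{-n+1})$, I apply Theorem~\ref{thm:Ordering of AG} to the pair $(\mathcal{B}_{1},\mathcal{A}_{-n+1})$ to produce a principal $\mathcal{A}_{-n}$ with $\mathbb{R}_{M}(\mathcal{B}_{1})\subsetneq\mathbb{R}_{M}(\mathcal{A}_{-n})\subsetneq\mathbb{R}_{M}(\mathcal{A}_{-n+1})$. Concatenating the two one-sided chains at $\mathcal{A}_{0}$ then produces the desired doubly infinite totally ordered family.

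There is no genuine obstacle here: since Theorem~\ref{thm:Ordering of AG} only requires strict $\mathbb{R}_{M}$-comparability of two principal AG to produce a strict intermediate principal AG, and since both inductive steps preserve exactly this hypothesis by construction, the recursion runs without any additional verification. The only observation worth recording is that at each step the newly produced AG is inserted strictly between two already comparable ones, so the full sequence $\langle\mathcal{A}_{i}\mid i\in\mathbb{Z}\rangle$ is automatically totally ordered by $\mathbb{R}_{M}$-inclusion and strictly sandwiched between $\mathbb{R}_{M}(\mathcal{B}_{1})$ and $\mathbb{R}_{M}(\mathcal{B}_{2})$.
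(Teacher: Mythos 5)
Your proposal is correct and is exactly the argument the paper intends: the paper's proof consists of the single sentence that the corollary is an immediate consequence of Theorem~\ref{thm:Ordering of AG}, and the intended content is precisely the two-sided iteration of that density result which you spell out. Your write-up simply makes the bookkeeping explicit.
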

\begin{proof}
This is an immediate consequence of Theorem \ref{thm:Ordering of AG}.
\end{proof}
In particular, if we let $\B_{\text{pol}}:=\left\{ (\eps^{-n})\mid n\in\N\right\} $
and $\B_{\text{exp}}:=\left\{ \left(e^{n/\eps}\right)\mid n\in\N\right\} $,
by Corollary \ref{cor:Ordering of AG, part 2} we have that there
are infinitely many principal non equivalent AG between $\B_{\text{pol}}$
and $\B_{\text{exp}}$. However, as we will show in Section \ref{sec:An-unexpected-isomorphism},
$\B_{\text{pol}}:=\left\{ (\eps^{-n})\mid n\in\N\right\} $ and $\B_{\text{exp}}:=\left\{ \left(e^{n/\eps}\right)\mid n\in\N\right\} $
are isomorphic, and this shows that not all isomorphic AG are equivalent.

In \cite{GiLu15}, we proved that $\R_{M}(\B)$ is the minimal (with
respect to inclusion) asymptotically closed solid ring containing
the AG $\B$. Therefore, we deduce that, modulo isomorphism, all the
objects in a skeleton subcategory of $\mathbf{\textsc{Ag}}_{1}$ are
asymptotically closed solid rings.

In \cite{GiLu15}, we introduced the notion of ``exponential of an
AG'', which was crucial to study linear ODE's with generalized constant
coefficients. We recall its definition.
\begin{defn}
\label{def:expB}Let $\B$ be an AG, and let $\mu:\R\ra\R_{\ge0}$
be a non decreasing function such that
\begin{align}
\lim_{x\to+\infty}\mu(x) & =+\infty;\nonumber \\
\forall b\in\B\,\exists c\in\B:\  & \mu(b_{\eps})^{2}<_{\mathbb{I}}\mu(c_{\eps}).\label{eq:muSquare}
\end{align}
We set $\mu(\B):=\left\{ \left(\mu(H\cdot b_{\eps})\right)_{\eps}\mid H\in\R_{>0},b\in\B\right\} $.
In particular,
\[
e^{\B}:=\left\{ e^{H\cdot b}\mid H\in\R_{>0},b\in\B\right\} 
\]
is called the \emph{exponential of $\B$}.
\end{defn}
\noindent The following results will be needed to prove Thm.~\ref{thm:expFunctor}.
\begin{lem}
\label{lem:muOfB}In the hypotheses of Def.~\ref{def:expB}, we have
that $\mu(\B)$ is an AG.\end{lem}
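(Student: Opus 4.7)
The plan is to verify, one by one, each of the five defining properties (i)--(v) of an AG from Def.~\ref{def:asymptoticGauge} for $\mu(\B)$. The short ones are (i), (ii) and (iv): (i) is immediate since $\mu$ takes values in $\R_{\ge 0}\subseteq\R$; for (ii) I pick $i_0\in\B$ with $\lim_{\mathbb{I}}i_0=+\infty$ (axiom (ii) for $\B$), and then $\mu(i_0)=\mu(1\cdot i_0)\in\mu(\B)$ with $\lim_{\mathbb{I}}\mu(i_0)=+\infty$ by monotonicity of $\mu$ and $\lim_{x\to+\infty}\mu(x)=+\infty$; (iv) is routine because $|r\cdot\mu(Hb_\eps)|\le(|r|+1)\mu(Hb_\eps)$ gives $r\cdot\mu(Hb)=O\bigl(\mu(Hb)\bigr)$ with $\mu(Hb)\in\mu(\B)$ itself.

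The substantive work is in (iii) and (v). Given $i=\mu(H_1 b_1)$ and $j=\mu(H_2 b_2)$ in $\mu(\B)$, I first reduce both nets to $\mu$ applied to a single common argument. Using axiom (v) for $\B$ twice --- first on $b_1,b_2$ and then on the resulting dominator together with an infinite net $i_0\in\B$ coming from axiom (ii) --- I obtain $t\in\B$ with $t>_{\mathbb{I}}0$, $\lim_{\mathbb{I}}t=+\infty$ and $|b_{1,\eps}|+|b_{2,\eps}|\le K\,t_\eps$ eventually, for some constant $K>0$. Monotonicity of $\mu$ then yields, with $L:=K\max(H_1,H_2)$,
\[
\mu(H_k b_{k,\eps})\le\mu(L\,t_\eps)\quad(k=1,2),
\]
so $|i_\eps|+|j_\eps|\le 2\mu(L\,t_\eps)$ and $i_\eps\cdot j_\eps\le\mu(L\,t_\eps)^2$.

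For (v) I take $s:=\mu(L\,t)\in\mu(\B)$: it dominates $|i|+|j|$ up to the constant $2$, and $s>_{\mathbb{I}}0$ because $L\,t_\eps\to+\infty$ together with $\lim_{x\to+\infty}\mu(x)=+\infty$ forces $\mu(L\,t_\eps)>0$ eventually. For (iii), the crucial additional ingredient is the hypothesis \eqref{eq:muSquare}: using axiom (iv) of $\B$ to pick $\tau\in\B$ dominating $L\,t$, and applying \eqref{eq:muSquare} to $\tau$, I produce $c\in\B$ with $\mu(\tau_\eps)^2<_{\mathbb{I}}\mu(c_\eps)$; combining this with the monotonicity of $\mu$ and the scalar freedom built into $\mu(\B)=\{\mu(H\cdot b_\eps)\mid H>0,\,b\in\B\}$ yields $\mu(L\,t_\eps)^2=O\bigl(\mu(H'c_\eps)\bigr)$ for a suitable $H'>0$, so $p:=\mu(H'c)\in\mu(\B)$ dominates the product $ij$ up to a multiplicative constant.

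The main technical subtlety, which I expect to be the chief obstacle, is the careful bookkeeping of the positive multiplicative constants that accumulate when passing between $\B$ and its $O$-closure: the hypothesis \eqref{eq:muSquare} is formulated for $b\in\B$ without explicit scalars, whereas the reductions above naturally produce $\mu(L\,t)^2$ with $L>0$ typically larger than $1$ and the argument $L\,t$ lying in $\R_M(\B)$ rather than in $\B$ proper. The way out lies precisely in the scalar flexibility of the definition of $\mu(\B)$ together with axiom (iv) of $\B$, which together allow one to absorb any such constant into the free parameter $H'$ of the dominator $\mu(H'c)$.
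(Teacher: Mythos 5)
Your proof is correct and follows essentially the same route as the paper's: the paper verifies the sum and product axioms via monotonicity of $\mu$, the two inequalities $\mu(H\cdot i_{\eps})+\mu(K\cdot j_{\eps})\le 2\,\mu(H|i_{\eps}|+K|j_{\eps}|)$ and $\mu(H\cdot i_{\eps})\cdot\mu(K\cdot j_{\eps})\le\mu(H|i_{\eps}|+K|j_{\eps}|)^{2}$ together with hypothesis \eqref{eq:muSquare}, which is exactly your reduction to a single common argument followed by squaring. Your write-up is in fact more explicit than the paper's about the positivity requirement in axiom (v) and about the multiplicative constant sitting inside $\mu$ when \eqref{eq:muSquare} is invoked; the only caveat is that axiom (iv) of $\B$ gives big-O rather than pointwise domination, so that constant is not literally absorbed in the way you describe --- but the paper's own one-line treatment of the product axiom makes the identical silent step, so this is a shared, not a new, imprecision.
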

\begin{proof}
Def.~\ref{def:asymptoticGauge} \ref{enu:AGinfinite} and \cite[Lemma~18]{GiLu15}
imply the existence of $i\in\B$ such that $\lim_{\mathbb{I}}i=+\infty$.
Therefore, our assumptions yield $\lim_{\eps\in\mathbb{I}}\mu(i_{\eps})=+\infty$,
which proves Def.~\ref{def:asymptoticGauge} \ref{enu:AGinfinite}
for $\mu(\B)$. The asymptotic closure with respect to sum of absolute
values follows from monotonicity of $\mu$ and the inequality $\mu(H\cdot i_{\eps})+\mu(K\cdot j_{\eps})\le2\cdot\mu\left(H\cdot\left|i_{\eps}\right|+K\cdot\left|j_{\eps}\right|\right)$.
The asymptotic closure with respect to product follows from the inequality
\[
\mu(H\cdot i_{\eps})\cdot\mu(K\cdot j_{\eps})\le\mu\left(H\cdot\left|i_{\eps}\right|+K\cdot\left|j_{\eps}\right|\right)^{2}
\]
 and from assumption \eqref{eq:muSquare}.\end{proof}
\begin{lem}
\label{lem:big-OAndIncreas}Let $\mathbb{I}=(I,\le,\mathcal{I})$
be a set of indices, and let $x$, $y$, $z\in\R^{I}$. Let $\mu:\R\ra\R_{\ge0}$
be a non decreasing function. Then $x_{\eps}=O\left[\mu(y_{\eps})\right]$
and $y<_{\mathbb{I}}z$ imply $x_{\eps}=O\left[\mu(z_{\eps})\right]$.\end{lem}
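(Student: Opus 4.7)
My plan is to unfold both hypotheses down to their explicit quantifier form, use property~(iii) of Def.~\ref{def:setOfIndices} to move onto a common indexing set, and then reconcile the witnesses coming from the two hypotheses by using property~(iv); monotonicity of $\mu$ will convert the $\mu(y_\eps)$-bound into a $\mu(z_\eps)$-bound. Concretely, I would first apply Def.~\ref{def:2ndBigOh} to $x_\eps = O[\mu(y_\eps)]$ to obtain $A_1 \in \mathcal{I}$ such that, for every $a \in A_1$, there exist $H_a \in \R_{>0}$ and $\eps_0^a \in (A_1)_{\le a}$ with $|x_\eps| \le H_a\,\mu(y_\eps)$ on $(A_1)_{\le\eps_0^a}$ (the absolute value drops because $\mu\ge 0$). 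Unpacking $y<_{\mathbb{I}} z$ via Defs.~\ref{def:order} and \ref{def:epsSuffSmall} similarly yields $A_2\in\mathcal{I}$ such that, for every $b\in A_2$, some $e_b\in (A_2)_{\le b}$ makes $y_\eps<z_\eps$ hold on $(A_2)_{\le e_b}$. By property~(iii) of Def.~\ref{def:setOfIndices}, pick $A\in\mathcal{I}$ with $A\subseteq A_1\cap A_2$, and fix an arbitrary $a\in A$.

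The key step is to produce a single witness $\eps_0\in A$ that dominates neither $\eps_0^a$ nor $e_a$ from above. Applying property~(iv) of Def.~\ref{def:setOfIndices} to $A$ (using $\eps_0^a\le a\in A$ and $e_a\le a\in A$), both $A_{\le\eps_0^a}$ and $A_{\le e_a}$ are downward directed, hence non-empty; choose $\delta\in A_{\le\eps_0^a}$ and $\delta'\in A_{\le e_a}$. Since $\delta,\delta'\in A_{\le a}$ and $A_{\le a}$ is itself downward directed, there exists $\eps_0\in A_{\le a}$ with $\eps_0<\delta$ and $\eps_0<\delta'$, whence $\eps_0\le \eps_0^a$ and $\eps_0\le e_a$. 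For any $\eps\in A_{\le\eps_0}$, the inclusion $A\subseteq A_1\cap A_2$ together with $\eps\le \eps_0^a$ and $\eps\le e_a$ places $\eps$ simultaneously in $(A_1)_{\le\eps_0^a}$ and $(A_2)_{\le e_a}$; the two bounds $|x_\eps|\le H_a\,\mu(y_\eps)$ and $y_\eps<z_\eps$ then combine, via monotonicity of $\mu$, into $|x_\eps|\le H_a\,\mu(z_\eps)$. Since $a\in A$ was arbitrary, this establishes $x_\eps=O_\mathcal{I}(\mu(z_\eps))$, as desired.

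The main obstacle is precisely the reconciliation in the second paragraph: the natural witnesses $\eps_0^a\in A_1$ and $e_a\in A_2$ cannot be compared or minimised directly inside $A$, because neither need belong to $A$. The two-stage use of property~(iv) -- first pulling $\eps_0^a$ and $e_a$ back into $A$ through $A_{\le\eps_0^a}$ and $A_{\le e_a}$, and then fusing the resulting $A$-representatives via downward directedness of $A_{\le a}$ -- is the only delicate point; once a single $\eps_0\in A$ below both is secured, the rest is the straightforward combination of the $O$-bound with the pointwise inequality $\mu(y_\eps)\le\mu(z_\eps)$ provided by monotonicity of $\mu$.
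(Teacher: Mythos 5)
Your proof is correct and follows essentially the same route as the paper's: intersect the two index sets via Def.~\ref{def:setOfIndices}~(iii), reconcile the two witnesses by downward directedness from property~(iv), and finish with monotonicity of $\mu$. The only difference is cosmetic: where the paper invokes the restriction result \cite[Thm.~2.8~(x)]{GiNi14} to transport the big-O to the smaller set $D$, you instead apply both hypotheses directly at $a\in A\subseteq A_1\cap A_2$ and pull the witnesses back into $A$ by hand, which is a perfectly valid (and self-contained) substitute.
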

\begin{proof}
From the assumptions we get
\begin{align*}
 & \exists A\in\mathcal{I}\,\forall a\in A:\ x_{\eps}=O_{aA}\left[\mu(y_{\eps})\right];\\
 & \exists B\in\mathcal{I}\,\forall b\in B\,\exists\eps_{0}\le b\,\forall\eps\in B_{\le\eps_{0}}:\ y_{\eps}<z_{\eps}.
\end{align*}
Def.~\ref{def:setOfIndices} \ref{enu:SoI-Filter} implies the existence
of $D\in\mathcal{I}$ such that $D\subseteq A\cap B$. For $d\in D$,
\cite[Thm.~2.8 (x)]{GiNi14} yields $x_{\eps}=O_{dD}\left[\mu(y_{\eps})\right]$,
and therefore, for suitable $H\in\R_{>0}$ and $\eps_{0}\le d$, $\eps_{1}\le d$,
we can write $|x_{\eps}|\le H\left|\mu(y_{\eps})\right|=H\mu(y_{\eps})$
for each $\eps\in D_{\le\eps_{1}}$ and $y_{\eps}<z_{\eps}$ for each
$\eps\in D_{\le\eps_{0}}$. Since $(\emptyset,d]=I_{\le d}$ is directed,
we can find $\bar{\eps}\le d$, $\eps_{0}$, $\eps_{1}$. Therefore,
for each $\eps\le\bar{\eps}$ we have $\left|x_{\eps}\right|\le H\mu(y_{\eps})\le H\mu\left(z_{\eps}\right)$
because $\mu$ is non decreasing. This proves our conclusion.\end{proof}
\begin{cor}
\textup{\label{cor:exp}Let $\mathcal{B}_{1}$, $\mathcal{B}_{2}$
be AG on the same set of indices $\mathbb{I}$, and let }$\mu:\R\ra\R_{\ge0}$\textup{
verify the assumptions of Def.~\ref{def:expB}. Then }$\mathbb{R}_{M}(\mathcal{B}_{1})\subseteq\mathbb{R}_{M}(\mathcal{B}_{2})$
\textup{implies} $\mathbb{R}_{M}(\mu(\B_{1}))\subseteq\mathbb{R}_{M}(\mu(\B_{2}))$.\end{cor}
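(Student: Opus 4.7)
\medskip

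\noindent\textbf{Proof plan.} The strategy is to take an arbitrary generator $\mu(H\cdot b_1)$ of $\mu(\mathcal{B}_1)$, use the inclusion $\mathbb{R}_M(\mathcal{B}_1)\subseteq\mathbb{R}_M(\mathcal{B}_2)$ to dominate $H\cdot b_1$ by a net of the form $K\cdot s$ with $s\in\mathcal{B}_2$, and then apply Lemma~\ref{lem:big-OAndIncreas} to transfer the big-O across $\mu$.

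In detail, suppose $x\in\mathbb{R}_M(\mu(\mathcal{B}_1))$. By definition of $\mu(\mathcal{B}_1)$ and of $\mathbb{R}_M(-)$, there exist $H\in\mathbb{R}_{>0}$ and $b_1\in\mathcal{B}_1$ with $x_\eps=O(\mu(H\cdot b_{1,\eps}))$ as $\eps\in\mathbb{I}$. Since $b_1\in\mathcal{B}_1\subseteq\mathbb{R}_M(\mathcal{B}_1)\subseteq\mathbb{R}_M(\mathcal{B}_2)$ by hypothesis, there exist $b_2\in\mathcal{B}_2$ and $C\in\mathbb{R}_{>0}$ with $|b_{1,\eps}|\le C\cdot|b_{2,\eps}|$ for $\eps$ sufficiently small in $\mathbb{I}$.

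Next, I invoke Def.~\ref{def:asymptoticGauge}~\ref{enu:AGabsSum} applied to the pair $(b_2,b_2)$ inside the AG $\mathcal{B}_2$: it yields $s\in\mathcal{B}_2$ such that $s>_{\mathbb{I}}0$ and $|b_{2,\eps}|\le D\cdot s_\eps$ eventually, for some $D\in\mathbb{R}_{>0}$. Chaining the two estimates gives $H\cdot b_{1,\eps}\le H\cdot|b_{1,\eps}|\le HCD\cdot s_\eps$ eventually. Setting $K:=HCD+1$ and using $s_\eps>0$ eventually (which is exactly $s>_{\mathbb{I}}0$), we obtain the strict inequality $H\cdot b_{1,\eps}<K\cdot s_\eps$ for $\eps$ sufficiently small, i.e.\ $(H\cdot b_1)<_{\mathbb{I}}(K\cdot s)$.

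Finally, Lemma~\ref{lem:big-OAndIncreas} with $y_\eps:=H\cdot b_{1,\eps}$ and $z_\eps:=K\cdot s_\eps$ upgrades $x_\eps=O[\mu(y_\eps)]$ to $x_\eps=O[\mu(z_\eps)]=O[\mu(K\cdot s_\eps)]$. Since $(\mu(K\cdot s_\eps))_\eps\in\mu(\mathcal{B}_2)$ by the very definition of $\mu(\mathcal{B}_2)$, we conclude $x\in\mathbb{R}_M(\mu(\mathcal{B}_2))$. The only non-routine step is the construction of the strict bound $(H\cdot b_1)<_{\mathbb{I}}(K\cdot s)$, which is why axiom~\ref{enu:AGabsSum} (granting a \emph{positive} dominating element inside $\mathcal{B}_2$) is essential; note that neither the assumption \eqref{eq:muSquare} nor the property $\lim_{x\to+\infty}\mu(x)=+\infty$ is needed here, only the monotonicity of $\mu$ used inside Lemma~\ref{lem:big-OAndIncreas}.
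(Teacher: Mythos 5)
Your proof is correct and follows essentially the same route as the paper's: dominate the generator $b_1\in\mathcal{B}_1$ by an element of $\mathcal{B}_2$ using $\mathbb{R}_M(\mathcal{B}_1)\subseteq\mathbb{R}_M(\mathcal{B}_2)$, then transfer across $\mu$ by monotonicity (Lemma~\ref{lem:big-OAndIncreas}). In fact you are more careful than the paper, which simply asserts the existence of $b''\in\mathcal{B}_2$ with $b'<_{\mathbb{I}}b''$; your use of axiom~\ref{enu:AGabsSum} to manufacture a strictly positive dominating net $s$ and hence the strict inequality needed for Lemma~\ref{lem:big-OAndIncreas} fills in exactly the step the paper glosses over.
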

\begin{proof}
Let $(y_{\eps}')=(\mu(H\cdot b'_{\eps}))\in\mu(\B_{1})$, with $b'\in\B_{1}$,
and let $b''\in\B_{2}$ be such that $b'<_{\mathbb{I}}b''$. Then
we have that $(y'_{\eps})<_{\mathbb{I}}(\mu(H\cdot b_{\eps}''))\in\mu\left(\B_{2}\right)$
since $\mu$ is monotone.\end{proof}
\begin{defn}
Let $\textsc{Ag}_{\le}$ be the subcategory of $\textsc{Ag}_{1}$
having the same objects of $\textsc{Ag}_{1}$, and arrows such that
$f\in\textsc{Ag}_{\le}(\mathcal{B}_{1},\mathcal{B}_{2})$ if $f\in\textsc{Ind}(\mathbb{I}_{1},\mathbb{I}_{2})$
and $\mathbb{R}_{M}\left(\mathcal{B}_{1}\circ f\right)=\mathbb{R}_{M}\left(\mathcal{B}_{2}\right)$.
Let $\mu$ verifies the assumptions of Def.~\ref{def:expB}, and
let $E_{\mu}:\textsc{Ag}_{\le}\longrightarrow\textsc{Ag}_{\le}$ be
defined on objects and maps of $\textsc{Ag}_{\le}$ as follows:
\begin{enumerate}[leftmargin=*,label=(\roman*),align=left ]
\item $E(\mathcal{B}):=\mu(\B)$ for each $\mathcal{B}\in\textsc{Ag}_{\le}$;
\item $E(f):=f$ for each $f\in\textsc{Ag}_{\le}(\mathcal{B}_{1},\mathcal{B}_{2})$.
\end{enumerate}
\end{defn}
\begin{thm}
\label{thm:expFunctor}$\textsc{Ag}_{\le}$ is a subcategory of $\textsc{Ag}_{1}$.
If $\mu$ verifies the assumptions of Def.~\ref{def:expB}, then
$E_{\mu}:\textsc{Ag}_{\le}\longrightarrow\textsc{Ag}_{\le}$ is a
functor.\end{thm}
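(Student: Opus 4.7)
The plan splits into two independent parts, the first structural and the second formal.

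For the subcategory claim, I have to check that $\textsc{Ag}_{\le}$ is closed under identities and composition inside $\textsc{Ag}_{1}$. Identities are immediate, since $\mathcal{B}\circ 1_{I}=\mathcal{B}$ trivially satisfies $\R_{M}(\mathcal{B}\circ 1_{I})=\R_{M}(\mathcal{B})$. The real content is composition: given $f\in\textsc{Ag}_{\le}(\mathcal{B}_{1},\mathcal{B}_{2})$ and $g\in\textsc{Ag}_{\le}(\mathcal{B}_{2},\mathcal{B}_{3})$, I want $\R_{M}(\mathcal{B}_{1}\circ(f\circ g))=\R_{M}(\mathcal{B}_{3})$. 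The one direction $\subseteq$ was essentially already covered in the proof that $\textsc{Ag}_{2}$ is a category; the reverse inclusion is obtained symmetrically. Namely, starting from $x\in\R_{M}(\mathcal{B}_{3})=\R_{M}(\mathcal{B}_{2}\circ g)$, write $x_{\eps}=O(b_{2}\circ g)$ with $b_{2}\in\mathcal{B}_{2}\subseteq\R_{M}(\mathcal{B}_{2})=\R_{M}(\mathcal{B}_{1}\circ f)$, so $b_{2}=O(b_{1}\circ f)$ for some $b_{1}\in\mathcal{B}_{1}$. Precomposing with $g$ and invoking Cor.~\ref{cor:morfind}\ref{enu:morphismsPreserveBigO}, I obtain $b_{2}\circ g=O(b_{1}\circ f\circ g)$ and hence $x\in\R_{M}(\mathcal{B}_{1}\circ(f\circ g))$, as required.

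For the functoriality of $E_{\mu}$, there are three routine checks (well-definedness on objects, on arrows, and preservation of identities and composition) of which only the second requires real work. Well-definedness on objects follows directly from Lemma~\ref{lem:muOfB}, which guarantees $\mu(\mathcal{B})$ is an AG, together with the trivial observation $\R_{M}(\mu(\mathcal{B}))\subseteq\R_{M}(\mu(\mathcal{B}))$ placing it in $\textsc{Ag}_{\le}$. Preservation of identities and composition is automatic since $E_{\mu}$ acts as the identity on arrows. The crux is to show that if $f\in\textsc{Ag}_{\le}(\mathcal{B}_{1},\mathcal{B}_{2})$, then $f\in\textsc{Ag}_{\le}(\mu(\mathcal{B}_{1}),\mu(\mathcal{B}_{2}))$, i.e.\ $\R_{M}(\mu(\mathcal{B}_{1})\circ f)=\R_{M}(\mu(\mathcal{B}_{2}))$.

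Here the key computational observation is the identity
\[
\mu(\mathcal{B}_{1})\circ f=\{\mu(H\cdot b)\circ f\mid H\in\R_{>0},\ b\in\mathcal{B}_{1}\}=\{\mu(H\cdot(b\circ f))\mid H\in\R_{>0},\ b\in\mathcal{B}_{1}\}=\mu(\mathcal{B}_{1}\circ f),
\]
which follows because precomposition with $f$ commutes with scalar multiplication and with the pointwise application of $\mu$. Granted this, the hypothesis $\R_{M}(\mathcal{B}_{1}\circ f)=\R_{M}(\mathcal{B}_{2})$, read as two inclusions, together with Cor.~\ref{cor:exp} (applicable because $\mathcal{B}_{1}\circ f$ and $\mathcal{B}_{2}$ live on the same set of indices $\mathbb{I}_{2}$), yields both $\R_{M}(\mu(\mathcal{B}_{1}\circ f))\subseteq\R_{M}(\mu(\mathcal{B}_{2}))$ and the reverse inclusion. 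Combining, $\R_{M}(\mu(\mathcal{B}_{1})\circ f)=\R_{M}(\mu(\mathcal{B}_{1}\circ f))=\R_{M}(\mu(\mathcal{B}_{2}))$, completing the proof. The only mildly subtle point is the commutation identity $\mu(\mathcal{B})\circ f=\mu(\mathcal{B}\circ f)$, which, once noted, reduces the whole functoriality statement to a direct application of Cor.~\ref{cor:exp} in both directions.
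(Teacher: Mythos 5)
Your proposal is correct and follows essentially the same route as the paper: composition in $\textsc{Ag}_{\le}$ is handled by two inclusions via Cor.~\ref{cor:morfind}, and functoriality of $E_{\mu}$ reduces to the identity $\mu(\mathcal{B})\circ f=\mu(\mathcal{B}\circ f)$ together with a two-sided application of Cor.~\ref{cor:exp}. The only (cosmetic) difference is that you phrase the composition argument directly in terms of big-O and membership in $\R_{M}$, whereas the paper phrases it via the order relation $<_{\mathbb{I}}$ on dominating nets.
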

\begin{proof}
To prove the first part, assume that $\mathbb{R}_{M}\left(\mathcal{B}_{1}\circ f\right)=\mathbb{R}_{M}\left(\mathcal{B}_{2}\right)$
and $\mathbb{R}_{M}\left(\mathcal{B}_{2}\circ g\right)=\mathbb{R}_{M}\left(\mathcal{B}_{3}\right)$.
Then if $b_{1}\circ f<_{\mathbb{I}_{2}}b_{2}$ and $b_{2}\circ g<_{\mathbb{I}_{3}}b_{3}$,
for $b_{i}\in\mathcal{B}_{i}$, then $b_{1}\circ f\circ g<_{\mathbb{I}_{3}}b_{2}\circ g<_{\mathbb{I}_{3}}b_{3}$
by Cor.~\ref{cor:morfind}. If $b_{3}<_{\mathbb{I}_{3}}b_{2}\circ g$
and $b_{2}<_{\mathbb{I}_{2}}b_{1}\circ f$ then $b_{3}<_{\mathbb{I}_{3}}b_{2}\circ g<_{\mathbb{I}_{3}}b_{1}\circ f\circ g$
once again by Cor.~\ref{cor:morfind}. This implies that $\mathbb{R}_{M}\left(\mathcal{B}_{1}\circ(f\circ g)\right)=\mathbb{R}_{M}\left(\mathcal{B}_{3}\right)$,
hence $f\circ g\in\textsc{Ag}_{\le}(\mathcal{B}_{1},\mathcal{B}_{3})$.
This and Cor.~\ref{cor:exp}~ show that $\textsc{Ag}_{\le}$ is
a category. By Cor.~\ref{cor:exp}, we also have that $\textsc{Ag}_{\le}$
is a subcategory of $\textsc{Ag}_{1}$. To show the second part, since
$E_{\mu}$ is the identity on arrows, it suffices to observe that
$\mathcal{B}$ and $\mu(\B)$ have the same set of indices for every
AG $\mathcal{B}$, and that $\mu(\B_{1})\circ f=\mu(\mathcal{B}_{1}\circ f)$.
Thus it follows by Cor.~\ref{cor:exp}~that $E_{\mu}(\mathcal{B}_{1})=\mu(\B_{1})\overset{f}{\longrightarrow}\mu(\B_{2})=E_{\mu}(\mathcal{B}_{2})$
is an arrow in $\textsc{Ag}_{\le}$ for every arrow $\mathcal{B}_{1}\overset{f}{\longrightarrow}\mathcal{B}_{2}$
in $\textsc{Ag}_{\le}$.
\end{proof}
Now we want to prove that the map $(\B,\mathcal{Z},\Omega)\mapsto\mathcal{G}(\B,\mathcal{Z},\Omega)$
is a functor. Clearly, $(\B,\mathcal{Z})\in\textsc{Ag}_{2}$, so we
need to introduce a category having as objects open sets like $\Omega$:
\begin{defn}
We denote by $\mathcal{O}\mathbb{R}^{\infty}$ the category having
as objects $\{\Omega\subseteq\mathbb{R}^{n}\mid n\in\mathbb{N},\,\Omega\,\text{open}\}$
and as morphisms $\OR(U,V):=\Coo(U,V)$.
\end{defn}
\noindent Therefore, we can now prove the following:
\begin{thm}
\label{thm:G-FunctorAlg}$\mathcal{G}:\textsc{AG}_{2}\times\left(\OR\right)^{\text{\emph{op}}}\ra\textsc{Alg}_{\R}$
is a functor, where $\textsc{Alg}_{\R}$ is the category of commutative
algebras over $\R$.\end{thm}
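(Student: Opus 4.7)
The plan is to define $\mathcal{G}$ on morphisms by simultaneous pullback along the index morphism $f$ and the smooth map $h$, and then verify preservation of moderate nets, of negligible nets, of the $\R$-algebra structure, of identities, and of composition. On objects, $\mathcal{G}((\mathcal{B},\mathcal{Z}),\Omega)$ is already a commutative $\R$-algebra by Def.~\ref{def:Colombeau-AG-Algebra}. On morphisms, given an arrow $(f,h):((\mathcal{B}_1,\mathcal{Z}_1),\Omega_1)\to((\mathcal{B}_2,\mathcal{Z}_2),\Omega_2)$ in $\textsc{Ag}_2\times(\OR)^{\text{op}}$ (so $f:I_2\to I_1$ is a morphism of indices in the sense of Def.~\ref{def:AG2}, and $h\in\Coo(\Omega_2,\Omega_1)$), I would set
\[
\mathcal{G}(f,h)\bigl([(u_\eps)_{\eps\in I_1}]\bigr):=\bigl[(u_{f(\eta)}\circ h)_{\eta\in I_2}\bigr].
\]

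The first verification is that pullback sends $\mathcal{E}_M(\mathcal{B}_1,\Omega_1)$ into $\mathcal{E}_M(\mathcal{B}_2,\Omega_2)$. Fix $K\Subset\Omega_2$ and $\alpha\in\N^n$; continuity of $h$ gives $h(K)\Subset\Omega_1$, and smoothness of $h$ together with Faà di Bruno produce a constant $C=C(h,K,\alpha)$ with
\[
\sup_{y\in K}\bigl|\partial^{\alpha}(u_{f(\eta)}\circ h)(y)\bigr|\le C\sum_{|\beta|\le|\alpha|}\sup_{x\in h(K)}\bigl|\partial^{\beta}u_{f(\eta)}(x)\bigr|.
\]
For each $\beta$, moderateness of $u$ yields $b_\beta\in\mathcal{B}_1$ with $\sup_{h(K)}|\partial^{\beta}u_\eps|=O(b_{\beta,\eps})$ as $\eps\in\mathbb{I}_1$; Cor.~\ref{cor:morfind}~\ref{enu:morphismsPreserveBigO} transports this to $O((b_\beta\circ f)_\eta)$ as $\eta\in\mathbb{I}_2$. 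The hypothesis $\R_{M}(\mathcal{B}_1\circ f)\subseteq\R_{M}(\mathcal{B}_2)$ then provides $\tilde b_\beta\in\mathcal{B}_2$ dominating $b_\beta\circ f$, and the AG axioms \ref{enu:AGProduct}, \ref{enu:AGproductScalar}, \ref{enu:AGabsSum} of Def.~\ref{def:asymptoticGauge} produce a single $b\in\mathcal{B}_2$ dominating the finite sum, which is what moderateness in $\mathcal{E}_M(\mathcal{B}_2,\Omega_2)$ requires.

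The second verification is that $\mathcal{N}(\mathcal{Z}_1,\Omega_1)$ lands in $\mathcal{N}(\mathcal{Z}_2,\Omega_2)$. Given $z\in\mathcal{Z}_2$ with $z>_{\mathbb{I}_2}0$, the hypothesis $\R_{M}(\mathcal{Z}_2)\subseteq\R_{M}(\mathcal{Z}_1\circ f)$ furnishes $z'\in\mathcal{Z}_1$ (which, using Def.~\ref{def:asymptoticGauge}~\ref{enu:AGabsSum}, may be replaced by an eventually positive element of $\mathcal{Z}_1$ still dominating it) satisfying $z_\eta=O((z'\circ f)_\eta)$, hence $(z'\circ f)_\eta^{-1}=O(z_\eta^{-1})$. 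Applying the same Faà di Bruno bound, then negligibility of $u$ on $h(K)$ against $z'$, and finally Cor.~\ref{cor:morfind}~\ref{enu:morphismsPreserveBigO} to move estimates to $\mathbb{I}_2$, yields $\sup_K|\partial^{\alpha}(u_{f(\eta)}\circ h)|=O((z'\circ f)_\eta^{-1})=O(z_\eta^{-1})$, as required. That $\mathcal{G}(f,h)$ is an $\R$-algebra homomorphism is immediate because the pullback acts componentwise in $\eps$ and pointwise in $x$, so it commutes with sums, products and scalar multiples of nets.

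It remains to check functoriality. The identity arrow $(1_{I_1},1_{\Omega_1})$ induces the identity on representatives; and if $(f_1,h_1)$, $(f_2,h_2)$ are composable, then composition in $\textbf{Ind}$ (as in the proof of Thm.~\ref{thm:IndCategory}) and in $(\OR)^{\text{op}}$ is ordinary functional composition on underlying sets, so
\[
u_{(f_1\circ f_2)(\eta_3)}\circ(h_1\circ h_2)=\bigl(u_{f_1(f_2(\eta_3))}\circ h_1\bigr)\circ h_2,
\]
which is precisely the composite $\mathcal{G}(f_2,h_2)\circ\mathcal{G}(f_1,h_1)$ applied to $[u]$. The main obstacle in this plan is the moderateness verification: simultaneously handling the change of spatial variable via Faà di Bruno on a compact set and the migration of asymptotic estimates from $\mathbb{I}_1$ to $\mathbb{I}_2$ along $f$, while collapsing the finitely many resulting $\mathcal{B}_2$-dominators into one, is the only genuinely non-bookkeeping step. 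The negligibility case, the algebraic check and functoriality are then direct consequences of the same machinery plus the categorical results already established for $\textbf{Ind}$ and $\textsc{Ag}_2$.
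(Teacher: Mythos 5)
Your proposal is correct and follows essentially the same route as the paper: the same definition $\mathcal{G}(f,h)[u_{\eps}]=[u_{f(\eta)}\circ h]$, the same transport of big-O estimates along $f$ via Cor.~\ref{cor:morfind}, and the same use of the inclusions $\R_{M}(\mathcal{B}_1\circ f)\subseteq\R_{M}(\mathcal{B}_2)$ and $\R_{M}(\mathcal{Z}_2)\subseteq\R_{M}(\mathcal{Z}_1\circ f)$ for moderateness and negligibility respectively. If anything, you are more explicit than the paper about the Fa\`a di Bruno bookkeeping for $\partial^{\alpha}(u\circ h)$ and about replacing the dominating net in $\mathcal{Z}_1$ by an eventually positive one, both of which the paper's proof passes over silently.
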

\begin{proof}
Let $i\in\textsc{Ag}_{2}((\B_{1},\mathcal{Z}_{1}),(\B_{2},\mathcal{Z}_{2}))$
be a morphism of pairs of AG and $h\in\Coo(\Omega_{2},\Omega_{1})$,
$\Omega_{j}$ being an open set in $\R^{n_{j}}$. The natural definition
of $\mathcal{G}(i,h)$ to get that $\mathcal{G}(i,h):\mathcal{G}(\B_{1},\mathcal{Z}_{1},\Omega_{1})\ra\mathcal{G}(\B_{2},\mathcal{Z}_{2},\Omega_{2})$
is a morphism of algebras is
\[
\mathcal{G}(i,h):[u_{\eps_{1}}]\mapsto\left[u_{i_{\eps_{2}}}\circ h\right].
\]
To prove that this definition is correct, we assume $K_{2}\Subset\Omega_{2}$
and $\alpha\in\N^{n_{2}}$. Since $(u_{\eps_{1}})\in\mathcal{E}_{M}(\B_{1},\Omega_{1})$
and $h(K_{2})=:K_{1}\Subset\Omega_{1}$, we obtain
\begin{equation}
\exists b'\in\B_{1}:\ \sup_{x\in K_{1}}\left|\partial^{\alpha}u_{\eps_{1}}(x)\right|=O(b'_{\eps_{1}})\text{ as }\eps_{1}\in\mathbb{I}_{1}.\label{eq:supOInB_1}
\end{equation}
But $b'\in\B_{1}\subseteq\R_{M}(\B_{1})$, so $b'\circ i\in\R_{M}(\B_{1}\circ i)\subseteq\R_{M}(\B_{2})$.
We can hence write $b'_{i_{\eps_{2}}}=O(b''_{\eps_{2}})$ as $\eps_{2}\in\mathbb{I}_{2}$
for a suitable $b''\in\B_{2}$. This, Cor.~\ref{cor:morfind}~\ref{enu:morphismsPreserveBigO}
and \eqref{eq:supOInB_1} yield
\[
\sup_{x\in K_{2}}\left|\partial^{\alpha}\left(u_{i_{\eps_{2}}}\circ h\right)(x)\right|=O\left(b'_{i_{\eps_{2}}}\right)=O(b''_{\eps_{2}})\text{ as }\eps_{2}\in\mathbb{I}_{2}.
\]
This shows that $\left[u_{i_{\eps_{2}}}\circ h\right]\in\mathcal{E}_{M}(\B_{2},\Omega_{2})$.
Now assume that $(u_{\eps_{1}})-(v_{\eps_{1}})\in\mathcal{N}(\mathcal{Z}_{1},\Omega_{1})$,
$K_{2}$, $\alpha$ as above, and $z\in\left(\mathcal{Z}_{2}\right)_{>0}$.
Since $\R_{M}(\mathcal{Z}_{2})\subseteq\R_{M}(\mathcal{Z}_{1}\circ i)$,
we can write $z_{\eps_{2}}=O\left(\zeta{}_{i_{\eps_{2}}}\right)$
as $\eps_{2}\in\mathbb{I}_{2}$ for a suitable $\zeta\in\left(\mathcal{Z}_{1}\right)_{>0}$,
and hence $\zeta_{i_{\eps_{2}}}^{-1}=O(z_{\eps_{2}}^{-1})$. We thus
obtain
\[
\sup_{x\in K_{1}}\left|\partial^{\alpha}u_{\eps_{1}}(x)-\partial^{\alpha}v_{\eps_{1}}(x)\right|=O(\zeta{}_{\eps_{1}}^{-1})\text{ as }\eps_{1}\in\mathbb{I}_{1},
\]
where $h(K_{2})=:K_{1}$. From this and Cor.~\ref{cor:morfind}~\ref{enu:morphismsPreserveBigO}
we obtain the conclusion
\[
\sup_{x\in K_{2}}\left|\partial^{\alpha}\left(u_{i_{\eps_{2}}}\circ h\right)(x)-\partial^{\alpha}\left(v_{i_{\eps_{2}}}\circ h\right)(x)\right|=O(z{}_{\eps_{2}}^{-1})\text{ as }\eps_{2}\in\mathbb{I}_{2}.
\]
The proof that $\mathcal{G}(i,h)$ is a morphism of $\R$-algebras
follows immediately from the pointwise definitions of the algebra
operations. The functorial properties of $\mathcal{G}$ follow directly
from the definition of $\mathcal{G}(i,h)$ and the fact that in the
domain $\textsc{AG}_{2}\times\left(\OR\right)^{\text{op}}$ composition
and identity are the corresponding set-theoretical operations.
\end{proof}
Now, let $n\in\mathbb{N}$ be fixed. Let $\mathcal{T}\R^{n}$ be the
subcategory of $\mathcal{O}\mathbb{R}^{\infty}$ having as objects
the open subsets of $\mathbb{R}^{n}$ and, as morphisms, the inclusions.
From Thm.~\ref{thm:G-FunctorAlg} we get that $\mathcal{G}(\mathcal{B},\mathcal{Z},-):\left(\mathcal{T}\R^{n}\right)^{\text{op}}\ra\textsc{Alg}_{\R}$
is a functor, i.e.~it is a presheaf. Trivially generalizing \cite{GKOS},
it is also possible to prove that $\mathcal{G}(\B,\mathcal{Z},-)$
is a sheaf of differential algebras. In particular, the following
diagram commutes
\begin{equation}
\xymatrix{\mathcal{G}(\B_{1},\mathcal{Z}_{1},\Omega_{1})\ar[r]^{\mathcal{G}(i,h)}\ar[d]_{\partial_{1}^{\alpha}} & \mathcal{G}(\B_{2},\mathcal{Z}_{2},\Omega_{2})\ar[d]^{\partial_{2}^{\alpha}}\\
\mathcal{G}(\B_{1},\mathcal{Z}_{1},\Omega_{1})\ar[r]_{\mathcal{G}(i,h)} & \mathcal{G}(\B_{2},\mathcal{Z}_{2},\Omega_{2})
}
\label{eq:diffAlgMorph}
\end{equation}
for each multi-index $\alpha\in\N^{n}$ and each inclusion $h\in\mathcal{T}\R^{n}(\Omega_{2},\Omega_{1})$.
Clearly, in \eqref{eq:diffAlgMorph}, $\partial_{k}^{\alpha}:[u_{\eps}]\in\mathcal{G}(\B_{k},\mathcal{Z}_{k},\Omega_{k})\mapsto[\partial^{\alpha}u_{\eps}]\in\mathcal{G}(\B_{k},\mathcal{Z}_{k},\Omega_{k})$.
Let us note that, in general, diagram \eqref{eq:diffAlgMorph} doesn't
commute if $h$ is an arbitrary smooth function. For this reason,
when we want to deal with differential algebras, we will always consider
$\mathcal{T}\R^{n}$ instead of the category $\OR$.

As a consequence of Thm.~\ref{thm:G-FunctorAlg}, we also have that
essentially all the constructions of Colombeau-like algebras are functorial.
For example, we can consider the set of indices $\mathbb{I}^{\srm}$
of the special algebra, the AG $\B^{\srm}:=\left\{ (\eps^{-a})\mid a\in\R_{>0}\right\} $,
and the full subcategory $\textsc{Ag}_{\mathbb{I}^{\srm}}$ of $\textsc{Ag}_{1}$
of all the AG on $\mathbb{I}^{\srm}$. Clearly, $\mathcal{G}^{\srm}(\B,\Omega):=\mathcal{G}(\B,\B,\Omega)$
is a functor $\mathcal{G}^{\srm}:\textsc{Ag}_{\mathbb{I}^{\srm}}\times\left(\OR\right)^{\text{op}}\ra\textsc{Alg}_{\R}$
which corresponds to the usual sheaf via the restriction $\mathcal{G}^{\srm}(\B^{\srm},\Omega)$
only for $\Omega\in\mathcal{T}\R^{n}$. Analogously, we can consider
$\hat{\mathcal{G}}$, $\mathcal{G}^{\erm}$, $\mathcal{G}^{\drm}$
and $\mathcal{G}^{2}$.

We also finally note that if we consider an inclusion $h\in\mathcal{T}\R^{n}(\Omega_{2},\Omega_{1})$
and a morphism of pairs of AG $i\in\textsc{Ag}_{2}((\B_{1},\mathcal{Z}_{1}),(\B_{2},\mathcal{Z}_{2}))$,
then $\mathcal{G}(i,h):\mathcal{G}(\B_{1},\mathcal{Z}_{1},\Omega_{1})\ra\mathcal{G}(\B_{2},\mathcal{Z}_{2},\Omega_{2})$
preserves all polynomial and differential operations. Of course, it
also takes generalized functions in the domain $\mathcal{G}(\B_{1},\mathcal{Z}_{1},\Omega_{1})$
into generalized functions in the codomain $\mathcal{G}(\B_{2},\mathcal{Z}_{2},\Omega_{2})$.
We can therefore state that $\mathcal{G}(i,h)$ permits to relate
differential problems framed in $\mathcal{G}(\B_{1},\mathcal{Z}_{1},\Omega_{1})$
to those framed in $\mathcal{G}(\B_{2},\mathcal{Z}_{2},\Omega_{2})$;
see also the next Thm.~\ref{thm:iso}.

\section{\label{sec:An-unexpected-isomorphism}An unexpected isomorphism}

If we set $\B_{\text{pol}}:=\left\{ (\eps^{-n})\mid n\in\N\right\} $
and $\B_{\text{exp}}:=\left\{ \left(e^{n/\eps}\right)\mid n\in\N\right\} $,
it is well known (see \cite{GKOS,GiLu15}) that an ODE like
\begin{equation}
\begin{cases}
x'(t)-[\eps^{-1}]\cdot x(t)=0;\\
x(0)=1,
\end{cases}\label{eq:ODE}
\end{equation}
has no solution in the algebra $\mathcal{G}(\B_{\text{pol}},\Omega)=\mathcal{G}^{\srm}(\Omega)$,
but it has a (unique) solution $x(t)=\left[e^{\frac{1}{\eps}t}\right]$,
$t\in\Rtil_{c}(\B_{\text{exp}})$, in $\mathcal{G}(\B_{\text{exp}},\R)$.
On the other hand, if we set $\lambda(\eps):=-\frac{1}{\log\eps}$,
for $\eps\in(0,1)$, and $\lambda(1):=1$, then we have $\lim_{\eps\to0^{+}}\lambda(\eps)=0^{+}$
and hence, by Example~\ref{exa:morphSetsOfIndices}~\ref{enu:morph-I^s},
$\lambda\in\textsc{Ind}(\mathbb{I}^{\srm},\mathbb{I}^{\srm})$ is
a morphism of set of indices. Moreover, $\left((e^{n/\eps})\circ\lambda\right)(\eps)=\eps^{-n}$
and hence $\R_{M}(\B_{\text{exp}}\circ\lambda)=\R_{M}(\B_{\text{pol}})$.
Therefore, $\lambda\in\textsc{Ag}_{1}(\B_{\text{exp}},\B_{\text{pol}})$
is a morphism of AG. Analogously, if we set $\eta(\eps):=e^{-\frac{1}{\eps}}$,
for $\eps\in(0,1)$, and $\eta(1):=1$, then we have $\lambda=\eta^{-1}$
and $\eta\in\textsc{AG}_{1}(\B_{\text{pol}},\B_{\text{exp}})$. Therefore
$\B_{\text{pol}}\simeq\B_{\text{exp}}$ as AG. Thm.~\ref{thm:G-FunctorAlg}
thus yields
\begin{align*}
\mathcal{G}(\B_{\text{pol}},\Omega) & \simeq\mathcal{G}(\B_{\text{exp}},\Omega)\\
\Rtil(\B_{\text{pol}}) & \simeq\Rtil(\B_{\text{exp}}).
\end{align*}
This does not imply that the algebra $\mathcal{G}(\B_{\text{exp}},\R)$
is useless, because we still have the fact that the Cauchy problem
\eqref{eq:ODE} has no solution in $\mathcal{G}(\B_{\text{pol}},\R)$.
Nonetheless, we can say that the isomorphism $\lambda\in\textsc{AG}_{1}(\B_{\text{exp}},\B_{\text{pol}})$
transforms \eqref{eq:ODE} into
\begin{equation}
\begin{cases}
x'(t)-[-\log\eps]\cdot x(t)=0;\\
x(0)=1.
\end{cases}\label{eq:TransformedODE}
\end{equation}
Therefore, \eqref{eq:TransformedODE} has solution in $\mathcal{G}(\B_{\text{pol}},\R)$
if and only if \eqref{eq:ODE} has solution in $\mathcal{G}(\B_{\text{exp}},\R)$.

\noindent This example is generalized in the following theorem, where
we talk, essentially for the sake of simplicity, of ODE.
\begin{thm}
\label{thm:iso}Let $\mathbb{I\in\textsc{Ind}}$ be a set of indices
and let $b$, $c\in\R^{I}$ be infinite nets, i.e.~such that $\lim_{\eps\in\mathbb{I}}b_{\eps}=\lim_{\eps\in\mathbb{I}}c_{\eps}=+\infty$.
Set
\[
\text{\emph{AG}}(b):=\left\{ (b_{\eps}^{n})\mid n\in\N\right\} 
\]
for the AG generated by $b$ (and analogously for $c$). Assume that
$\eta$, $\lambda\in\textsc{Ind}(\mathbb{I},\mathbb{I})$ are morphisms
of $\mathbb{I}$ such that $\eta=\lambda^{-1}$:
\begin{equation}
b_{\eta(\eps)}=O_{\mathbb{\mathcal{I}}}\left(c_{\eps}\right)\ ,\ c_{\lambda(\eps)}=O_{\mathbb{\mathcal{I}}}\left(b_{\eps}\right)\text{ as }\eps\in\mathbb{I}.\label{eq:b-c-iso-relations}
\end{equation}
Then
\begin{enumerate}[leftmargin=*,label=(\roman*),align=left ]
\item \label{enu:AGb-iso-AGc}$\text{\emph{AG}}(b)\simeq\text{\emph{AG}}(c)$
as AG;
\item \label{enu:isoAlgAndCGN}$\mathcal{G}(\text{\emph{AG}}(b),\Omega)\simeq\mathcal{G}(\text{\emph{AG}}(c),\Omega)$
and $\Rtil(\text{\emph{AG}}(b))\simeq\Rtil(\text{\emph{AG}}(c))$;
\item \label{enu:ODE}let $F=[F_{\eps}]\in\mathcal{G}(\text{\emph{AG}}(b),\R^{n}\times\R)$,
$\bar{x}=[\bar{x}_{\eps}]\in\Rtil^{n}$, $\bar{t}=[\bar{t}_{\eps}]\in\Rtil$.
Then the Cauchy problem
\begin{equation}
\begin{cases}
x'(t)=F(x(t),t);\\
x(\bar{t})=\bar{x},
\end{cases}\label{eq:ODE-b}
\end{equation}
has a solution $x\in\mathcal{G}(\text{\emph{AG}}(b),(t_{1},t_{2}))$
if and only if the Cauchy problem
\begin{equation}
\begin{cases}
y'(t)=\left[F_{\lambda(\eps)}\right](y(t),t);\\
y\left(\left[\bar{t}_{\lambda(\eps)}\right]\right)=\left[\bar{x}_{\lambda(\eps)}\right],
\end{cases}\label{eq:ODE-c}
\end{equation}
has a solution $y\in\mathcal{G}(\text{\emph{AG}}(c),(t_{1},t_{2}))$.
\end{enumerate}
\end{thm}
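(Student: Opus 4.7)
The proof naturally splits along the three claims, with the heavy lifting concentrated in (i); (ii) will then follow almost formally, and (iii) will be essentially a transport argument across the isomorphism from (ii).

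For (i), I would show directly that $\eta \in \textsc{Ag}_1(\text{AG}(b),\text{AG}(c))$ and $\lambda \in \textsc{Ag}_1(\text{AG}(c),\text{AG}(b))$ are mutually inverse morphisms of AG. Unwinding Def.~\ref{def:AG1}, the first requires $\R_M(\text{AG}(b)\circ \eta) = \R_M(\text{AG}(c))$. One inclusion is immediate from the hypothesis $b_{\eta(\eps)} = O_{\mathcal{I}}(c_\eps)$: raising to the $n$-th power gives $b_{\eta(\eps)}^n = O(c_\eps^n)$ for every $n$, so $\text{AG}(b)\circ\eta \subseteq \R_M(\text{AG}(c))$. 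For the converse, I would apply Cor.~\ref{cor:morfind}~\ref{enu:morphismsPreserveBigO} to the hypothesis $c_{\lambda(\eps)} = O(b_\eps)$, pre-composing with the morphism $\eta$: this yields $c_\eps = c_{\lambda(\eta(\eps))} = O(b_{\eta(\eps)})$, and again powers give $\R_M(\text{AG}(c)) \subseteq \R_M(\text{AG}(b)\circ \eta)$. The argument for $\lambda$ is symmetric. Since $\eta$ and $\lambda$ are inverse as maps $I \to I$, the identity $\eta\circ\lambda = 1_I = \lambda\circ\eta$ lives in $\textsc{Ag}_1$, giving the claimed isomorphism.

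Part (ii) is immediate: by Thm.~\ref{thm:G-FunctorAlg}, $\mathcal{G}(-,\Omega)$ sends isomorphisms of pairs of AG to isomorphisms of $\R$-algebras (via the embedding $\textsc{Ag}_1 \hookrightarrow \textsc{Ag}_2$ of Def.~\ref{def:AG1}), so $\mathcal{G}(\lambda,1_\Omega)$ and $\mathcal{G}(\eta,1_\Omega)$ are inverse algebra isomorphisms between $\mathcal{G}(\text{AG}(b),\Omega)$ and $\mathcal{G}(\text{AG}(c),\Omega)$; taking $\Omega$ a point (or using the scalar part) yields the isomorphism of rings of generalized numbers.

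For (iii), the strategy is to apply $\mathcal{G}(\lambda, 1_{(t_1,t_2)})$ termwise to the Cauchy problem \eqref{eq:ODE-b}. By the explicit formula for $\mathcal{G}(i,h)$ from the proof of Thm.~\ref{thm:G-FunctorAlg}, the image of $x = [x_\eps]$ is $y := [x_{\lambda(\eps)}]$, and the image of $F = [F_\eps]$ evaluated at $(y(t),t)$ is $[F_{\lambda(\eps)}](y(t),t)$; similarly, the image of $\bar x$ is $[\bar x_{\lambda(\eps)}]$ and of $\bar t$ is $[\bar t_{\lambda(\eps)}]$. Since the functor preserves polynomial operations and commutes with $\partial_t$ by diagram \eqref{eq:diffAlgMorph} (the base map here is the identity inclusion $1_{(t_1,t_2)}$), the equation $x'(t) = F(x(t),t)$ is sent to $y'(t) = [F_{\lambda(\eps)}](y(t),t)$ and the initial condition $x(\bar t) = \bar x$ is sent to $y([\bar t_{\lambda(\eps)}]) = [\bar x_{\lambda(\eps)}]$, which is precisely \eqref{eq:ODE-c}. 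The converse direction applies $\mathcal{G}(\eta,1_{(t_1,t_2)})$ using the inverse relations $b_\eps = O(c_{\lambda(\eps)})$ derived above.

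\textbf{Main obstacle.} The delicate point is that evaluation of a generalized function at a generalized point is defined componentwise at level $\eps$, so one must verify that the equality $\mathcal{G}(\lambda,1_{(t_1,t_2)})\bigl([F_\eps](x(t),t)\bigr) = [F_{\lambda(\eps)}]\bigl(y(t),t\bigr)$ and its analogue for $x(\bar t)$ genuinely hold in $\mathcal{G}(\text{AG}(c),(t_1,t_2))$ and $\Rtil(\text{AG}(c))$; this requires checking that moderateness of the resulting representative is preserved under the substitution $\eps \mapsto \lambda(\eps)$, which is exactly what the morphism condition on $\lambda$ guarantees via Cor.~\ref{cor:morfind}~\ref{enu:morphismsPreserveBigO}. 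Once this compatibility is spelled out, the equivalence of solvability is formal from the isomorphism in (ii).
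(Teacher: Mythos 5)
Your proposal is correct and follows essentially the same route as the paper: establish that $\eta$ and $\lambda$ are mutually inverse morphisms of AG via the big-O relations \eqref{eq:b-c-iso-relations} and Cor.~\ref{cor:morfind}, deduce \ref{enu:isoAlgAndCGN} from the functoriality of $\mathcal{G}(-,\Omega)$, and obtain \ref{enu:ODE} by transporting a solution with $\mathcal{G}(\lambda,1_{(t_1,t_2)})$ using Thm.~\ref{thm:G-FunctorAlg} and diagram \eqref{eq:diffAlgMorph}. Your treatment of part \ref{enu:AGb-iso-AGc} is in fact slightly more explicit than the paper's (which compresses the two inclusions into one line), and the compatibility check you flag for evaluation at generalized points is a genuine detail the paper leaves implicit.
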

\begin{proof}
Assumption \eqref{eq:b-c-iso-relations} yields $\left(b^{n}\circ\eta\right)(\eps)=b_{\eta(\eps)}^{n}=O_{\mathbb{\mathcal{I}}}\left(c_{\eps}^{n}\right)$
so $\R_{M}(\text{AG}(b)\circ\eta)=\R_{M}(\text{AG}(c))$. Analogously,
we have $\R_{M}(\text{AG}(c)\circ\lambda)=\R_{M}(\text{AG}(b))$.
This shows that $\eta$ and $\lambda$ are morphisms of AG, and hence
it proves \ref{enu:AGb-iso-AGc}. Property \ref{enu:isoAlgAndCGN}
follows from the functorial property of $\mathcal{G}(-,\Omega):\text{AG}_{1}\ra\textsc{Alg}_{\R}$.
To show \ref{enu:ODE}, let $x=[x_{\eps}]\in\mathcal{G}(\text{\emph{AG}}(b),(t_{1},t_{2}))$
be a solution of \eqref{eq:ODE-b} and set $y:=\mathcal{G}\left(\lambda,1_{(t_{1},t_{2})}\right)(x)=\left[x_{\lambda(\eps)}\right]$.
Therefore, Thm.~\ref{thm:G-FunctorAlg} and \eqref{eq:diffAlgMorph}
yield the conclusion.
\end{proof}
For instance, if $b$, $c:(0,1]\ra(0,1]$ are homeomorphisms such
that $\lim_{\eps\to0^{+}}b_{\eps}=0=\lim_{\eps\to0^{+}}c_{\eps}$,
then $\eta:=c\circ b^{-1}$ and $\lambda:=\eta^{-1}$ verify the assumptions
of this theorem.

\section{\label{sec:Col_n}The Category of Colombeau Algebras}

In this section, we want to show that the Colombeau AG algebra, the
related derivation of generalized functions and the embedding of distributions
are all functorial constructions with respect to the change of AG.
Although in this section we work on an arbitrary set of indices, we
restrict our study only to embeddings of Schwartz's distributions
defined through a mollifier. Therefore, we are going to deal with
mollifiers with null positive moments, namely with function $\rho\in\mathcal{S}(\mathbb{R}^{n})$
such that $\int\rho(x)x^{k}\diff{x}=0$ for every $k\in\N^{n}$, $|k|\geq1$.
We call \emph{Colombeau mollifier} any such function.
\begin{defn}
\label{def:pAG}Let $n\in\N_{>0}$ be a fixed natural number. Then
$\textsc{pAG}$ denotes the \emph{category of principal AG}, whose
objects are pairs $(b,\B)$, where $\B$ is a principal AG on a set
of indices $\mathbb{I}$ and $b\in\B$ is a generator of $\B$. Arrows
$f\in\textsc{pAG}((b_{1},\B_{1}),(b_{2},\B_{2}))$ are morphisms $f\in\textsc{AG}_{1}(\B_{1},\B_{2})$
of AG that preserve the generator, i.e.~such that $b_{1}\circ f=b_{2}$.
Let us note that if $f\in\textsc{pAG}((b_{1},\B_{1}),(b_{2},\B_{2}))$
and $g\in\textsc{pAG}((b_{2},\B_{2}),(b_{3},\B_{3}))$, then the composition
in $\textsc{pAG}$ is given as in $\textsc{AG}_{1}$, i.e.~by $f\circ g$
because $f:I_{2}\ra I_{1}$, $g:I_{3}\ra I_{2}$.
\end{defn}
\noindent Whilst the previous category acts as domain in the Colombeau
construction, the following \emph{category of Colombeau algebras}
acts as codomain.
\begin{defn}
\label{def:Col_n}Let $\textsc{DAlg}_{\R}$ be the category of differential
real algebras. We say that $(G,\partial,i)\in\textsc{Col}_{n}$ if:
\begin{enumerate}[leftmargin=*,label=(\roman*),align=left ]
\item $(G,\partial):\left(\mathcal{T}\R^{n}\right)^{\text{op}}\ra\textsc{DAlg}_{\R}$
is a functor (i.e.~it is a presheaf of differential real algebras).
In particular, $\partial_{\Omega}^{\alpha}:G(\Omega)\ra G(\Omega)$
is a derivation for all $\alpha\in\N^{n}$.
\item If we think at both functors $\mathcal{D}'$, $G:\left(\mathcal{T}\R^{n}\right)^{\text{op}}\ra\textsc{Vect}_{\R}$
with values in the category of real vector spaces, then $i:\mathcal{D}'\ra G$
is a natural transformation such that $\text{ker}\left(i_{\Omega}\right)=\{0\}$
for every $\Omega\in\mathcal{T}\R^{n}$.
\end{enumerate}

\noindent Moreover, for every $\Omega\in\mathcal{T}\R^{n}$, we have:
\begin{enumerate}[leftmargin=*,label=(\roman*),align=left,start=3]
\item $\Coo(\Omega)$ is a subalgebra of $G(\Omega)$;
\item $i_{\Omega}(f)=f$ for all $f\in\Coo(\Omega)$;
\item Let $D_{\Omega}^{\alpha}:\D'(\Omega)\ra\D'(\Omega)$ be the $\alpha\in\N^{n}$
derivation of distributions, then the following diagram commutes
\begin{equation}
\xymatrix{\D'(\Omega)\ar[r]^{i_{\Omega}}\ar[d]_{D_{\Omega}^{\alpha}} & G(\Omega)\ar[d]^{\partial_{\Omega}^{\alpha}}\\
\D'(\Omega)\ar[r]_{i_{\Omega}} & G(\Omega)
}
\label{eq:derivationsCommute}
\end{equation}

\end{enumerate}

\noindent An arrow $\varphi\in\textsc{Col}_{n}\left((G,\partial,i),(H,d,j)\right)$
is a natural transformation $\phi:(G,\partial)\ra(H,d)$ such that
for all $\Omega\in\mathcal{T}\R^{n}$ the following diagram commutes:
\begin{equation}
\xymatrix{\D'(\Omega)\ar[r]^{j_{\Omega}}\ar[d]_{i_{\Omega}} & H(\Omega)\\
G(\Omega)\ar[ur]_{\varphi_{\Omega}}
}
\label{eq:arrowsCol_n}
\end{equation}

\end{defn}
\noindent The following results prove the goal of the present section:
\begin{lem}
\label{lem:ColCategory}$\textsc{Col}_{n}$ is a category.\end{lem}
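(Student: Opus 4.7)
The plan is to verify the three standard categorical axioms---existence of identities, well-definedness of composition (i.e., that the composite of two $\textsc{Col}_n$-morphisms is again a $\textsc{Col}_n$-morphism), and associativity together with the identity laws---and to observe that all of these reduce either to the corresponding property for natural transformations between presheaves of differential $\mathbb{R}$-algebras, or to a one-line chase of the triangle diagram \eqref{eq:arrowsCol_n}. Since the underlying data of an object $(G,\partial,i)\in\textsc{Col}_n$ is a presheaf $G$ on $\mathcal{T}\R^n$ taking values in $\textsc{DAlg}_{\R}$ together with a natural transformation $i:\mathcal{D}'\ra G$, the composition of arrows in $\textsc{Col}_n$ will be defined as the usual vertical composition of natural transformations, i.e.~$(\psi\circ\varphi)_\Omega:=\psi_\Omega\circ\varphi_\Omega$ for each open $\Omega\in\mathcal{T}\R^n$.

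First I would introduce identities: for each $(G,\partial,i)\in\textsc{Col}_n$, take $1_{(G,\partial,i)}$ to be the identity natural transformation $1_G$ on the presheaf $G$. Each component $1_{G,\Omega}:G(\Omega)\ra G(\Omega)$ is the identity of a differential $\R$-algebra, hence trivially a morphism of differential $\R$-algebras and commutes with every $\partial_\Omega^\alpha$. The triangle \eqref{eq:arrowsCol_n} for $1_G$ becomes $1_{G,\Omega}\circ i_\Omega=i_\Omega$, which is immediate, so $1_G\in\textsc{Col}_n((G,\partial,i),(G,\partial,i))$.

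Next I would handle composition. Let $\varphi\in\textsc{Col}_n((G,\partial,i),(H,d,j))$ and $\psi\in\textsc{Col}_n((H,d,j),(K,e,k))$. Pointwise, $\psi_\Omega\circ\varphi_\Omega:G(\Omega)\ra K(\Omega)$ is a composition of morphisms of differential $\R$-algebras, hence a morphism of differential $\R$-algebras, and naturality with respect to the restriction maps in $\mathcal{T}\R^n$ holds because vertical composition of natural transformations is again a natural transformation. The only genuinely new verification is the triangle condition \eqref{eq:arrowsCol_n} for $\psi\circ\varphi$, which reads $(\psi_\Omega\circ\varphi_\Omega)\circ i_\Omega=k_\Omega$; by the hypotheses on $\varphi$ and $\psi$ this follows from the short chain
\[
(\psi_\Omega\circ\varphi_\Omega)\circ i_\Omega=\psi_\Omega\circ(\varphi_\Omega\circ i_\Omega)=\psi_\Omega\circ j_\Omega=k_\Omega.
\]

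Finally, associativity of composition and the identity laws $\varphi\circ 1_G=\varphi=1_H\circ\varphi$ hold componentwise because they hold in $\textsc{DAlg}_\R$ at each $\Omega\in\mathcal{T}\R^n$; equivalently, they are inherited from the functor category $\textsc{DAlg}_\R^{(\mathcal{T}\R^n)^{\mathrm{op}}}$ of which the underlying data $(G,\partial)$ is an object. There is no real obstacle here: the compatibility with the embeddings of $\mathcal{D}'$ is the only non-automatic condition, and it is preserved by composition and trivially satisfied by identities, as shown above.
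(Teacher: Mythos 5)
Your proposal is correct and follows essentially the same route as the paper: identities are the identity natural transformations, composition is vertical composition of natural transformations, and the only nontrivial point is the compatibility triangle with the embeddings of $\mathcal{D}'$, which you verify by the same one-line chase $(\psi_\Omega\circ\varphi_\Omega)\circ i_\Omega=\psi_\Omega\circ j_\Omega=k_\Omega$ that the paper expresses as a commuting diagram. Your explicit remarks on associativity and the identity laws being inherited componentwise are a harmless elaboration of what the paper leaves implicit.
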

\begin{proof}
For every object $(G,\partial,i)\in\textsc{Col}_{n}$, the identity
$1_{G(\Omega)}:G(\Omega)\ra G(\Omega)$ serves as the identity arrow
of $(G,\partial,i)$ in $\textsc{Col}_{n}$. To conclude the proof,
it is sufficient to consider the composition of arrows. Let $\phi\in\textsc{Col}_{n}\left((G,\partial,i),(H,d,j)\right)$,
$\psi\in\textsc{Col}_{n}\left((H,d,j),(F,D,k)\right)$, then $\psi\circ\phi:(G,\partial)\ra(F,D)$
is a natural tranformation and the following diagram commutes:

\[
\xymatrix{G(\Omega)\ar[rd]_{\phi_{\Omega}} & \D'(\Omega)\ar[l]^{i_{\Omega}}\ar[d]_{j{}_{\Omega}}\ar[r]_{k_{\Omega}} & F(\Omega)\\
 & H(\Omega)\ar[ru]_{\psi_{\Omega}}
}
\]
In particular, we have that 
\[
\xymatrix{\D'(\Omega)\ar[r]^{k_{\Omega}}\ar[d]_{i{}_{\Omega}} & F(\Omega)\\
G(\Omega)\ar[ru]_{\left(\psi\circ\phi\right)_{\Omega}}
}
\]
commutes, namely $\psi\circ\phi\in\textsc{Col}_{n}\left((G,\partial,i),(F,D,k)\right)$.\end{proof}
\begin{thm}
\label{thm:ColFunctors}Let $\rho$ be a Colombeau mollifier. For
each $(b,\B)\in\textsc{pAG}$, set
\[
\Colomb_{n}^{\rho}(b,\B):=(\mathcal{G}(\B,-),\partial,i_{b}^{\rho}),
\]
where $i_{b}^{\rho}$ is the usual embedding defined using the generator
$b$ and the fixed mollifier $\rho$ (see \cite[Sec.~4]{GiLu15} for
details). For $f\in\textsc{pAG}((b_{1},\B_{1}),(b_{2},\B_{2}))$ and
$\Omega\in\mathcal{T}\R^{n}$, set
\begin{equation}
\Colomb_{n}^{\rho}(f)_{\Omega}:[u_{\eps_{1}}]\in\mathcal{G}(\B_{1},\Omega)\mapsto\left[u_{f(\eps_{2})}\right]\in\mathcal{G}(\B_{2},\Omega).\label{eq:Embeddings}
\end{equation}
Then
\[
\Colomb_{n}^{\rho}:\textsc{pAG}\ra\textsc{Col}_{n}
\]
is a functor.\end{thm}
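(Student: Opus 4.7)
The plan is to verify in order: (i) $\Colomb_n^\rho$ is well-defined on objects, (ii) it is well-defined on arrows, and (iii) it preserves composition and identities. Steps (i) and (iii) are largely bookkeeping, while step (ii) — in particular the compatibility of the arrow $\Colomb_n^\rho(f)$ with the distributional embedding — is the substantive point and relies crucially on the defining condition $b_1 \circ f = b_2$ of morphisms in $\textsc{pAG}$.

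For step (i), fix $(b,\B) \in \textsc{pAG}$. Restricting the codomain of $\mathcal{G}(\B,-)$ to inclusions (as discussed after Thm.~\ref{thm:G-FunctorAlg}), we obtain a presheaf $(\mathcal{G}(\B,-),\partial) : (\mathcal{T}\R^n)^{\text{op}} \to \textsc{DAlg}_\R$ of differential real algebras; injectivity and the sheaf axiom are inherited verbatim from the special-algebra argument in \cite{GKOS}. The embedding $i_b^\rho$ built from a Colombeau mollifier $\rho$ and the generator $b$ is, by \cite[Sec.~4]{GiLu15}, a natural transformation $\D' \to \mathcal{G}(\B,-)$ whose components have trivial kernel, restrict to the identity on $\Coo(\Omega)$, and make \eqref{eq:derivationsCommute} commute; moreover $\Coo(\Omega)$ embeds as a subalgebra via the constant net. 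This verifies every clause of Def.~\ref{def:Col_n}.

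For step (ii), take $f \in \textsc{pAG}((b_1,\B_1),(b_2,\B_2))$. The map $\Colomb_n^\rho(f)_\Omega : [u_{\eps_1}] \mapsto [u_{f(\eps_2)}]$ is precisely the component at $(\Omega,\Omega)$ of the morphism $\mathcal{G}(f,1_\Omega)$ of Thm.~\ref{thm:G-FunctorAlg}, so well-definedness, the fact that it is a morphism of $\R$-algebras, naturality in $\Omega$ along inclusions, and commutation with all $\partial^\alpha$ (via \eqref{eq:diffAlgMorph}) come for free. The one nontrivial check is the embedding triangle \eqref{eq:arrowsCol_n}: we must show
\[
\Colomb_n^\rho(f)_\Omega \circ (i_{b_1}^\rho)_\Omega = (i_{b_2}^\rho)_\Omega.
\]
Unwinding the definition from \cite[Sec.~4]{GiLu15}, $(i_{b_1}^\rho)_\Omega(T)$ is represented by a net whose $\eps_1$-entry is a convolution of (a cutoff of) $T$ with the $b_{1,\eps_1}$-scaling of $\rho$. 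Applying $\Colomb_n^\rho(f)_\Omega$ simply replaces $\eps_1$ by $f(\eps_2)$, producing a net whose $\eps_2$-entry uses the scaling $b_{1,f(\eps_2)} = (b_1 \circ f)_{\eps_2} = b_{2,\eps_2}$ — exactly the net representing $(i_{b_2}^\rho)_\Omega(T)$. Thus the triangle commutes precisely because morphisms of $\textsc{pAG}$ preserve generators; this is where the refinement from $\textsc{AG}_1$ to $\textsc{pAG}$ is used, and it is the main conceptual point of the theorem.

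For step (iii), functoriality is immediate from the set-theoretic form of \eqref{eq:Embeddings}: if $f \in \textsc{pAG}((b_1,\B_1),(b_2,\B_2))$ and $g \in \textsc{pAG}((b_2,\B_2),(b_3,\B_3))$, then
\[
\Colomb_n^\rho(g)_\Omega \circ \Colomb_n^\rho(f)_\Omega \bigl([u_{\eps_1}]\bigr) = \Colomb_n^\rho(g)_\Omega \bigl([u_{f(\eps_2)}]\bigr) = [u_{f(g(\eps_3))}] = \Colomb_n^\rho(f \circ g)_\Omega \bigl([u_{\eps_1}]\bigr),
\]
and $\Colomb_n^\rho(1_{\mathbb{I}})_\Omega$ is the identity on $\mathcal{G}(\B,\Omega)$. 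The main obstacle is the embedding compatibility in step (ii): everything else is either transported from Thm.~\ref{thm:G-FunctorAlg} or a direct substitution, but the identity $b_1 \circ f = b_2$ is exactly what is needed to match the mollifier scales on the nose, and it is what motivates the very definition of $\textsc{pAG}$.
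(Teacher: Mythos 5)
Your proposal is correct and follows essentially the same route as the paper: transport the algebraic, naturality, and differentiation properties from Thm.~\ref{thm:G-FunctorAlg}, isolate the embedding triangle as the one substantive check, and close it with the generator identity $b_{1}\circ f=b_{2}$ applied to the mollifier scale, with composition and identity handled by direct substitution. The only cosmetic difference is that the paper first invokes \cite[Thm.~4.7]{GiLu15} to reduce the triangle to compactly supported $T$ before computing $[T*b_{1,f(\eps_{2})}\odot\rho]=[T*b_{2,\eps_{2}}\odot\rho]$, whereas you fold the cutoff into the computation in-line.
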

\begin{proof}
The property $\Colomb_{n}^{\rho}(b,\B)\in\textsc{Col}_{n}$ for every
$(b,\B)\in\textsc{pAG}$ is a consequence of the results about Colombeau
principal AG-algebras and embeddings of distributions proved in \cite[Sec.~3 and 4]{GiLu15}.

\noindent We are left to prove the properties of $\Colomb_{n}^{\rho}$
with respect to arrows. First of all, let us prove that
\begin{align}
\Colomb_{n}^{\rho}(f) & \in\textsc{Col}_{n}(\Colomb_{n}^{\rho}(b_{1},\B_{1}),\Colomb_{n}^{\rho}(b_{2},\B_{2}))=\nonumber \\
 & =\textsc{Col}_{n}\left((\mathcal{G}(\B_{1},-),\partial,i_{b_{1}}^{\rho}),(\mathcal{G}(\B_{2},-),\partial,i_{b_{2}}^{\rho})\right)\label{eq:arrowsCol-f}
\end{align}
for every $f\in\textsc{pAG}((b_{1},\B_{1}),(b_{2},\B_{2}))$. Theorem
\ref{thm:G-FunctorAlg} gives that $\Colomb_{n}^{\rho}(f)_{\Omega}:\mathcal{G}(\B_{1},\Omega)\ra\mathcal{G}(\B_{2},\Omega)$
is a morphism of $\R$-algebras (we recall that $\mathcal{G}(\B,\Omega):=\mathcal{G}(\B,\B,\Omega)$
for every AG $\B$). From \cite[Thm.~4.7]{GiLu15}, it suffices to
prove the commutativity of the diagram 
\[
\xymatrix{\D'(\Omega)\ar[r]^{\left(i_{b_{2}}^{\rho}\right)_{\Omega}}\ar[d]_{\left(i_{b_{1}}^{\rho}\right)_{\Omega}} & \mathcal{G}(\B_{2},\Omega)\\
\mathcal{G}(\mathcal{B}_{1},\Omega)\ar[ur]_{\Colomb_{n}^{\rho}(f)_{\Omega}}
}
\]
only for compactly supported $T\in\D'(\Omega)$. In this case, we
have
\begin{align*}
\Colomb_{n}^{\rho}(f)_{\Omega}\left[\left(i_{b_{1}}^{\rho}\right)_{\Omega}(T)\right] & =\Colomb_{n}^{\rho}(f)_{\Omega}\left[T*b_{1\eps_{1}}\odot\rho\right]=\\
 & =[T*b_{1f(\eps_{2})}\odot\rho]=[T*b_{2\eps_{2}}\odot\rho]=\\
 & =\left(i_{b_{2}}^{\rho}\right)_{\Omega}(T).
\end{align*}
If $\Omega'\subseteq\Omega$, then
\begin{align*}
\Colomb_{n}^{\rho}(f)_{\Omega'}\left\{ [u_{\eps_{1}}]|_{\Omega'}\right\}  & =\Colomb_{n}^{\rho}(f)_{\Omega'}\left[u_{\eps_{1}}|_{\Omega'}\right]=\\
 & =[u_{f(\eps_{2})}|_{\Omega'}]=\left\{ \Colomb_{n}^{\rho}(f)_{\Omega}\left[u_{\eps_{1}}\right]\right\} |_{\Omega'}.
\end{align*}
This shows that $\Colomb_{n}^{\rho}(f):\mathcal{G}(\B_{1},-)\ra\mathcal{G}(\B_{2},-)$
is a natural transformation. To show \eqref{eq:arrowsCol-f}, there
remains to prove that $\Colomb_{n}^{\rho}(f)_{\Omega}$ is a morphism
of differential algebras:
\begin{align*}
\Colomb_{n}^{\rho}(f)_{\Omega}\left\{ \partial^{\alpha}[u_{\eps_{1}}]\right\}  & =\Colomb_{n}^{\rho}(f)_{\Omega}\left[\partial u_{\eps_{1}}^{\alpha}\right]=\\
 & =[\partial^{\alpha}u_{f(\eps_{2})}]=\partial^{\alpha}\left\{ \Colomb_{n}^{\rho}(f)_{\Omega}\left[u_{\eps_{1}}\right]\right\} .
\end{align*}
Since $1_{I}\in\textsc{pAG}((b,\B),(b,\B))$ is the identity in the
category $\textsc{pAG}$, it is immediate to see that $\Colomb_{n}^{\rho}(1_{I})=1_{\mathcal{G}(\B,-)}=1_{\Colomb_{n}^{\rho}(b,\B)}$
from the definition of the map \eqref{eq:Embeddings}. Finally, let
\begin{align*}
f & \in\textsc{pAG}((b_{1},\B_{1}),(b_{2},\B_{2}))\\
g & \in\textsc{pAG}((b_{2},\B_{2}),(b_{3},\B_{3})).
\end{align*}
Let $[u_{\eps_{1}}]\in\mathcal{G}(\B_{1},\Omega)$, then 
\begin{align*}
\Colomb_{n}^{\rho}(g)_{\Omega}\left\{ \Colomb_{n}^{\rho}(f)_{\Omega}([u_{\varepsilon_{1}}])\right\}  & =\Colomb_{n}^{\rho}(g)_{\Omega}([u_{f(\varepsilon_{2})}])=\\
 & =[u_{f(g(\varepsilon_{3}))}]=\Colomb_{n}^{\rho}(f\circ g)_{\Omega}([u_{\varepsilon_{1}}]).
\end{align*}

\end{proof}

\section{Conclusions}

We are forced to consider a different AG when we have to deal with
particular differential problems, whose solutions grow more than polynomially
in $\eps$. It is therefore natural to search for a notion of morphism
of AG, and to see whether Colombeau AG constructions behave in the
correct way with respect to these morphisms. The results of Sec.~\ref{sec:The-categories-Ag21},
\ref{sec:An-unexpected-isomorphism}, \ref{sec:Col_n} show that both
the construction of the differential algebra and that of the embedding
by means of a mollifier are functorial with respect to a natural notion
of morphism of AG. As shown in Sec.~\ref{sec:An-unexpected-isomorphism},
this permits to relate differential problems solved for different
AG.

\end{document}